\newtheorem{questionIntro}{Question}
\newtheorem{theorem}{Theorem}[section]
\newtheorem{proposition}[theorem]{Proposition}
\newtheorem{corollary}[theorem]{Corollary}
\theoremstyle{remark}
\numberwithin{equation}{section}
\newcommand{\calU}{\ensuremath{\mathcal{U}}}
\newcommand{\calV}{\ensuremath{\mathcal{V}}}
\newcommand{\calS}{\ensuremath{\mathcal{S}}}
\newcommand{\calT}{\ensuremath{\mathcal{T}}}
\newcommand{\calI}{\ensuremath{\mathcal{I}}}
\newcommand{\gln}{\ensuremath{\operatorname{GL}}}
\newcommand{\modu}{\ensuremath{\mathrm{\;mod\;}}}
\newcommand{\scrS}{\ensuremath{\mathscr{S}}}
\newcommand{\scrT}{\ensuremath{\mathscr{T}}}
\newcommand{\scrV}{\ensuremath{\mathscr{V}}}
\newcommand{\bbZ}{\ensuremath{\mathbb{Z}}}
\begin{document}

\title{Infinite co-minimal pairs in the integers and integral lattices}

\author{Arindam Biswas}
\address{Department of Mathematics, Technion - Israel Institute of Technology, Haifa 32000, Israel}
\curraddr{}
\email{biswas@campus.technion.ac.il}
\thanks{}

\author{Jyoti Prakash Saha}
\address{Department of Mathematics, Indian Institute of Science Education and Research Bhopal, Bhopal Bypass Road, Bhauri, Bhopal 462066, Madhya Pradesh,
India}
\curraddr{}
\email{jpsaha@iiserb.ac.in}
\thanks{}

\subjclass[2010]{11B13, 05E15, 05B10, 11P70}

\keywords{Additive complements, Sumsets, Representation of integers, Additive number theory}

\begin{abstract}
Given two nonempty subsets $A, B$ of a group $G$, they are said to form a co-minimal pair if $A \cdot B = G$, and $A' \cdot B \subsetneq G$ for any $\emptyset \neq A' \subsetneq A$ and $A\cdot B' \subsetneq G$ for any $\emptyset \neq B' \subsetneq B$. In this article, we show several new results on co-minimal pairs in the integers and the integral lattices. We prove that for any $d\geq 1$, the group $\bbZ^{2d}$ admits infinitely many automorphisms such that for each such automorphism $\sigma$, there exists a subset $A$ of $\bbZ^{2d}$ such that $A$ and $\sigma(A)$ form a co-minimal pair.  The existence and construction of co-minimal pairs in the integers with both the subsets $A$ and $B$ ($A\neq B$) of infinite cardinality was unknown. We show that such pairs exist and explicitly construct these pairs satisfying a number of algebraic properties.
\end{abstract}

\maketitle

\section{Introduction}


 Let $(G,+)$ be an abelian group and $W\subseteq G$ be a nonempty subset. A nonempty set $W'\subseteq G$ is said to be an \textit{additive complement} to $W$ if 
$W + W' = G.$ Additive complements have been studied since a long time in the context of representations of the integers, e.g., they appear in the works of Erd\H{o}s, Hanani, Lorentz and others. See \cite{Lorentz54}, \cite{Erdos54}, \cite{ErdosSomeUnsolved57} etc. 
In \cite{NathansonAddNT4}, Nathanson introduced the notion of \emph{minimal additive complements} for nonempty subsets of groups. An additive complement $W'$ to $W$ is said to be minimal if no proper subset of $W'$ is an additive complement to $W$, i.e., 
$$W + W' = G \,\text{ and }\, W + (W'\setminus \lbrace w'\rbrace)\subsetneq G \,\,\, \forall w'\in W'.$$
Nathanson was interested in a number of questions from metric geometry arising in the context of discrete groups. As an example, the existence of minimal nets in groups is strongly related to the existence of minimal additive complements of generating sets, see \cite[\S 1]{NathansonAddNT4} (see also \cite[Problem 1]{NathansonAddNT4}). A notion stronger to minimal additive complements is that of \emph{additive co-minimal pairs}. Given two nonempty subsets $A, B$ of a group $G$, they are said to form a co-minimal pair if $A \cdot B = G$, and $A' \cdot B \subsetneq G$ for any $\emptyset \neq A' \subsetneq A$ and $A\cdot B' \subsetneq G$ for any $\emptyset \neq B' \subsetneq B$. Thus, they are pairs $(A\in G,B\in G)$ such that each element in a pair is a minimal additive complement to the other. The notion of co-minimal pairs was considered in a prior work of the authors see \cite[Definition 1.2]{CoMin1}. Henceforth, by a complement we shall mean an additive complement. If we mean set-theoretic complement, we shall explicitly state it. 

It is a challenging task to classify the co-minimal pairs in a given group. Even in the context of the group of integers $\bbZ$, they are not completely understood. In \cite{CoMin1}, it was shown that non-empty finite subsets in free abelian groups (not necessarily of finite rank) belong to co-minimal pairs. However, the existence and the construction of infinite co-minimal pairs in $\mathbb{Z}$, i.e., co-minimal pairs $(A,B)$ where both $A$ and $B$ are infinite ($A\neq B$) has been unknown. One of our motivations in this article is to show the existence and give explicit constructions of these pairs. Further, we also study co-minimal pairs in higher rank integer lattices. Our constructions satisfy certain nice combinatorial and group theoretic properties. They are mainly motivated by the following questions:

\begin{questionIntro}
	\label{Qn:BddBelowBddAbove}
	Does there exist infinite subsets $A, B$ of $\bbZ$ which form a co-minimal pair and one of them is bounded below and the other is bounded above?
\end{questionIntro}

\begin{questionIntro}
	\label{Qn:OneIsSymm}
	Does there exist infinite subsets $A, B$ of $\bbZ$ which form a co-minimal pair and at least one of them is a symmetric subset of $\bbZ$?
\end{questionIntro}

In \cite{Kwon} and \cite{CoMin1}, it was established that $(A,A)$ is a co-minimal pair in an abelian group $G$, if and only if $A + A = G$ and $A$ avoids $3$-term arithmetic progressions. This motivates the following two questions when we note that the trivial automorphism $\sigma(g) = g, \forall g\in G$ fixes any subset $A\in G$.

\begin{questionIntro}
\label{Qn:AExists}
Given an automorphism $\sigma$ of a group $G$, does there exist subsets $A$ in $G$ such that $(A, \sigma (A))$ is a co-minimal pair? 
\end{questionIntro}

\begin{questionIntro}
\label{Qn:AExistsQuadrant}
Given an automorphism $\sigma$ of $\bbZ^d$, does there exist subsets $A$ in $G$ such that $A$ is contained in a quadrant
\footnote{
A subset $X$ of $\bbZ^d$ is said to be \textit{contained in a quadrant} if for any given $1\leq i \leq d$, the $i$-th coordinate of all the points of $X$ is either positive or negative. 
}
and $(A, \sigma (A))$ is a co-minimal pair? 
\end{questionIntro}

Note that Questions \ref{Qn:AExists}, \ref{Qn:AExistsQuadrant} are related to Questions \ref{Qn:OneIsSymm}, \ref{Qn:BddBelowBddAbove}. Indeed, having affirmative answers to Questions \ref{Qn:OneIsSymm}, \ref{Qn:BddBelowBddAbove} allows to answer Questions \ref{Qn:AExists}, \ref{Qn:AExistsQuadrant} for certain automorphisms of free abelian groups. For instance, if $\calU, \calV$ are infinite subsets of $\bbZ$ forming a co-minimal pair and $\calV$ is symmetric, then taking $A  = \calU \times \calV, \calV \times \calU$, we obtain an affirmative answer to Question \ref{Qn:AExists} when $G = \bbZ^2$ and $\sigma
=
\left(
\begin{smallmatrix} 
0 & 1\\
-1 & 0 
\end{smallmatrix}
\right), 
\left(
\begin{smallmatrix} 
0 & -1\\
1 & 0 
\end{smallmatrix}
\right)$.
Moreover, if $\calS, \calT$ are infinite subsets of $\bbZ$ forming a co-minimal pair and $\calS$ (resp. $\calT$) is bounded above (resp. below) by $0$, then taking $A  = (-\calS) \times \calT$, we obtain an affirmative answer to Question \ref{Qn:AExistsQuadrant} when $d = 2$ and $\sigma
=
\left(
\begin{smallmatrix} 
0 & -1\\
-1 & 0 
\end{smallmatrix}
\right)$.

In the following, for $?\in \{<, \leq, > , \geq\}$ and $x\in \bbZ$, the set $\{n\in \bbZ\,|\, n?x\}$ is denoted by $\bbZ_{?x}$. 
Let $X$ be a subset of $\bbZ$ of the form $\{n \in \bbZ
\,|\, a\leq n \leq b\}$ for some $a, b\in \bbZ$. If $X$ contains an even number of elements (i.e., if $b-a + 1$ is even), then the subset $\{x\in X\,|\, x\leq \frac{b + a -1}{2}\}$ (resp. $\{x\in X\,|\, x\geq \frac{b + a +1}{2}\}$) of $X$ is called the left half (resp. right half) of $X$. 
If $b-a + 1$ is divisible by $4$, then the subsets 
$\{x\in X\,|\, x\leq a - 1 + \frac{b - a + 1}{4}\}$, 
$\{x\in X\,|\, a - 1 + \frac{b - a + 1}{4} < x \leq a - 1 + \frac{b - a + 1}{2}\}$,
$\{x\in X\,|\, a - 1 + \frac{b - a + 1}{2} < x \leq a - 1 + \frac{3(b - a + 1)}{4}\}$,
$\{x\in X\,|\, a - 1 + \frac{3(b - a + 1)}{4} < x \}$
of $X$ are called the first quarter, the second quarter, the third quarter, and the fourth quarter of $X$ respectively. The first quarter (resp. the fourth quarter) of $X$ is also called the left quarter (resp. the right quarter) of $X$.

\subsection{Statement of results}
In Theorems \ref{Thm:CoMinST}, \ref{Thm:CoMinUV}, we prove that Questions 
\ref{Qn:BddBelowBddAbove}, \ref{Qn:OneIsSymm} admit answers in the affirmative. 
Theorems \ref{Thm:CoMinST}, \ref{Thm:CoMinUV} follow from Theorems \ref{Thm:CoMinS}, \ref{Thm:CoMinU}. 

\begin{theorem}
\label{Thm:CoMinST}
Let $\calT$ denote the subset $\{1, 2, 2^2, \cdots\}$ of $\bbZ$. Then there exists an infinite subset $\calS$ of $\bbZ_{\leq -1}$ such that $(\calS, \calT)$ is a co-minimal pair. 
\end{theorem}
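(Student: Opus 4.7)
The plan is to construct an infinite $\calS \subseteq \bbZ_{\leq -1}$ explicitly and then verify the three defining conditions of a co-minimal pair: (i) $\calS + \calT = \bbZ$; (ii) for each $s \in \calS$, there exists a witness $n_s \in \bbZ$ whose unique representation in $\calS + \calT$ uses $s$; and (iii) for each $k \geq 0$, there exists a witness $n^{(k)} \in \bbZ$ whose unique representation uses $2^k$. Conditions (ii) and (iii) are equivalent to the minimality of $\calS$ and $\calT$ in the co-minimal pair respectively, because both sets are infinite.

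For the construction I would exploit the lacunary (doubling) structure of $\calT$. The strategy is to partition $\bbZ$ into disjoint blocks $\{J_k\}_{k \geq 0}$ of cardinality $|J_k| = 2^k$, chosen so that each translate $J_k - 2^{k+1}$ lies in $\bbZ_{\leq -1}$ and so that the sets $\calS_k := J_k - 2^{k+1}$ are mutually disjoint. Setting $\calS := \bigsqcup_{k \geq 0} \calS_k$ gives $\calS_k + 2^{k+1} = J_k$, and hence $\calS + \calT \supseteq \bigcup_k J_k = \bbZ$, proving coverage. A natural first attempt takes $J_k$ to be dyadic intervals aligned with powers of $2$, though the precise choice must be tuned to guarantee both the sparsity and the separation conditions discussed below.

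For minimality, the candidate witness for both $s \in \calS_k$ and $2^{k+1} \in \calT$ is $n_s := s + 2^{k+1} \in J_k$. Uniqueness of this representation translates into the requirement that for every pair $(k'', k')$ with $(k'', k') \neq (k, k+1)$, the translated block $J_{k''} + (2^{k'} - 2^{k''+1})$ does not meet $J_k$: indeed, an alternative decomposition $n_s = s' + 2^{k'}$ with $s' \in \calS_{k''}$ rearranges to $n_s + (2^{k''+1} - 2^{k'}) \in J_{k''}$. Since the differences $2^{k'} - 2^{k''+1}$ take values of the restricted form $\pm 2^{m}(2^{r} - 1)$, this imposes a tractable $2$-adic constraint on the relative positions of the blocks.

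Thus my plan reduces to writing down an explicit family $\{J_k\}$ meeting the partition requirement, the negativity condition $J_k - 2^{k+1} \subseteq \bbZ_{\leq -1}$, and the cross-translation avoidance, after which coverage and minimality follow by direct combinatorial check. I expect the main obstacle to lie precisely in this simultaneous balancing: the blocks must be positioned to exhaust $\bbZ$, yet sparse enough (when shifted into $\bbZ_{\leq -1}$) to rule out all unintended representations. The binary-adic bookkeeping required to pin down such a family is where the bulk of the work will be; once the $J_k$'s are specified, the verifications of (i), (ii), and (iii) should reduce to routine case analysis on the binary expansions of candidate alternative witnesses.
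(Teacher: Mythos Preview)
Your plan has a fatal obstruction. Because the $J_k$ partition $\bbZ$, the proposed witnesses $\{n_s : s \in \calS\}$ exhaust all of $\bbZ$; requiring each of them to have a unique representation therefore forces $\calS + \calT = \bbZ$ to be a direct-sum decomposition (a tiling). But $\calT = \{2^j : j \geq 0\}$ admits no tiling complement whatsoever: pick any $s_0 \in \calS$; since $s_0 \in \bbZ$ it must itself be covered, say $s_0 = s' + 2^j$ with $s' \in \calS$, and then $s_0 + 2^j = s' + 2^{j+1}$ exhibits two distinct representations of the same integer. So no arrangement of the blocks $\{J_k\}$ can satisfy your cross-translation avoidance condition together with the partition requirement. (A related symptom you would hit even sooner: the canonical representations use only the powers $2^{k+1}$ for $k\geq 0$, so $2^0 = 1 \in \calT$ never acquires a witness.)

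The paper's route is essentially different. It first writes down an explicit, rather intricate set $S \subseteq \bbZ_{\leq -1}$ with $S + \calT = \bbZ$, and proves (Theorem~\ref{Thm:RequiredClusters}) that whenever $\scrS \subseteq S$ and $\scrT \subseteq \calT$ satisfy $\scrS + \scrT = \bbZ$, one already has $\scrT = \calT$ and $\scrS$ must contain certain explicit infinite families of points. Thus $\calT$ is a minimal complement of $S$; but $S$ is deliberately \emph{not} a minimal complement of $\calT$. The paper then prunes $S$ greedily to obtain $\calS := \bigcap_i S_i$, removing an element at each step only if what remains is still a complement. The forced families guarantee that $\calS$ is infinite and still a complement; minimality of $\calS$ is built into the greedy process; and minimality of $\calT$ persists by applying the same forcing argument to $\calS \subseteq S$. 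The key conceptual difference from your plan is that the paper allows many integers to have multiple representations and selects witnesses for elements of $\calS$ and for elements of $\calT$ separately, rather than insisting that a single family of uniquely represented integers serve both roles.
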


\begin{theorem}
\label{Thm:CoMinUV}
Let $\calV$ denote the subset of $\bbZ$ defined as 
$$\calV : = \{1, 2, 2^2, 2^3, \cdots\} \cup \{-1, -2, -2^2, -2^3, \cdots\}.$$
Then there exists an infinite subset $\calU$ of $\bbZ_{\leq -1}$ such that $(\calU, \calV)$ is a co-minimal pair. 
\end{theorem}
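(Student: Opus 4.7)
The plan is to construct $\calU$ as a union of finite blocks of negative integers placed at rapidly increasing scales $I_k = [-N_k,-N_{k-1}] \cap \bbZ$, with the $N_k$ chosen so that $N_{k+1}$ dominates $N_k + 2^{k+1}$. The blocks $\calU_k \subseteq I_k$ will be engineered so that $\bigcup_k (\calU_k + \calV) = \bbZ$ via a partition of $\bbZ$ into target intervals $J_k$, each covered by block $k$, and so that every $u \in \calU$ and every $\pm 2^j \in \calV$ admits a dedicated witness integer.

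First I would fix the intervals $J_k$ so that their union is $\bbZ$, for instance taking $J_k$ to be the band of integers whose magnitudes lie between two consecutive scales. For each $J_k$ I would build a finite set $\calU_k \subseteq I_k$ such that the translates $u+\calV$ with $u\in\calU_k$ cover $J_k$. The separation of the $I_k$ guarantees that any representation $n = u' + v'$ with $n \in J_k$ but $u' \in \calU_{k'}$, $k' \neq k$, forces $v' = \pm 2^j$ with $2^j$ comparable to $|u'|$; by letting $N_{k+1}/N_k$ grow rapidly we can make this interaction predictable and avoidable. Placing the elements of $\calU_k$ carefully inside $I_k$, I would then secure, for each $u \in \calU_k$, a target $n(u) \in J_k$ such that $n(u) - u \in \calV$ and $n(u) - u'' \notin \calV$ for every other $u'' \in \calU$. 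This gives the $\calU$-minimality half of the co-minimal pair property.

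For the $\calV$-side, for each $v = \pm 2^j$ I would reserve, inside some block $\calU_{k_j}$ with $k_j$ much larger than $j$, a dedicated element $u_j$ together with a target $m_j := u_j + v \in J_{k_j}$ so that $m_j - u'' \in \calV$ forces $u'' = u_j$. This is a matter of avoiding a controlled set of bad distances between $m_j$ and the rest of $\calU$, and it is achievable because $\calV$ is exponentially sparse, the blocks are widely separated, and within each block there is ample room to place $u_j$ away from any other element of $\calU$ that sits at a power-of-two distance from $m_j$.

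The principal obstacle is the simultaneous compatibility of all these constraints: every $u \in \calU$ must possess a $\calU$-witness in its own block and must not accidentally provide an alternative representation for any other block's witness, while every $\pm 2^j$ must possess a $\calV$-witness. The key quantitative lever is the growth rate of the scales $N_k$, which can be chosen inductively to rule out unintended cross-block interactions; once this growth is fixed, what remains inside each block is a finite combinatorial construction analogous to the one needed for Theorem \ref{Thm:CoMinST}, but enriched to handle both $+2^j$ and $-2^j$. This is the content of the auxiliary Theorem \ref{Thm:CoMinU} from which Theorem \ref{Thm:CoMinUV} is deduced.
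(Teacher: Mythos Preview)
Your block-based strategy is in the right spirit, but it diverges from the paper at the crucial step and leaves a real gap. The paper does not attempt to construct $\calU$ directly with a dedicated witness $n(u)$ for every element $u$. Instead it builds an explicit set $U=\bigcup_{n\ge 0}U_n$ with blocks sitting in the intervals $\calI_n=\{1,\dots,2^n\}-(1+2^{n+1})$ at the fixed scales $2^n$ (not at freely chosen rapidly growing scales), proves by a long case analysis that $\calV$ is already a \emph{minimal} complement of $U$ (this is Theorem~\ref{Thm:RequiredClustersV}, with nine separate claims locating, for each $\pm 2^j\in\calV$, an integer representable only via that shift), and only then extracts $\calU$ by the greedy sieve of Theorem~\ref{Thm:CoMinU}: enumerate $U=\{-u_1,-u_2,\dots\}$, delete $-u_i$ whenever the remainder is still a complement of $\calV$, and set $\calU=\bigcap_i U_i$. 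The sieve automatically yields $\calU$-minimality, while $\calV$-minimality is inherited from $U$ because $\calU\subseteq U$. What remains is to check that $\calU+\calV=\bbZ$, and the paper does this via a finiteness argument (each integer has only finitely many representations in $U+\calV$, so some representation survives into every $U_i$ and hence into $\calU$).

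The gap in your proposal is precisely the ``simultaneous compatibility'' you yourself flag. You propose to resolve it by placing the elements of $\calU_k$ carefully inside $I_k$ so that every $u$ has a unique witness, but this is where all the difficulty lies and you give no mechanism for it; with $\calV$ symmetric, each placed $u$ generates forbidden positions $u\pm 2^j$ on both sides, and arranging infinitely many such elements so that each has an exclusive witness while their union still covers $\bbZ$ is not obviously feasible by a soft growth-rate argument. The paper's sieve sidesteps this entirely: one never exhibits witnesses for the elements of $\calU$. Your final sentence mischaracterises Theorem~\ref{Thm:CoMinU}---that theorem is the sieving step, not a within-block combinatorial construction. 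If you want your direct route to succeed, you would need to replace the hand-wave by a concrete placement scheme, and that is substantially harder than what the paper actually does.
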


Using these two results, we establish Theorems \ref{Thm:AExists}, \ref{Thm:AExistsQuadrant}, which prove that Questions \ref{Qn:AExists}, \ref{Qn:AExistsQuadrant} admit answers in the affirmative for an infinite class of automorphisms of free abelian groups. An immediate consequence of Theorem \ref{Thm:AExists} is stated below. 

\begin{theorem}
\label{Thm:Z2d}
If $\sigma$ is an automorphism of $\bbZ^2$, i.e., an element of $
\gln_2(\bbZ)$ having exactly two nonzero entries, then there exists a subset $A$ of $\bbZ^2$ such that $(A, \sigma(A))$ forms a co-minimal pair in $\bbZ^2$. For any $d\geq 1$, the group $\bbZ^{2d}$ admits infinitely many automorphisms such that for each such automorphism $\sigma$, there exists a subset $A$ of $\bbZ^{2d}$ such that $A$ and $\sigma(A)$ form a co-minimal pair. 
\end{theorem}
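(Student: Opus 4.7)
The plan is to construct $A$ as a Cartesian product of one-dimensional co-minimal pairs supplied by Theorems \ref{Thm:CoMinST} and \ref{Thm:CoMinUV}, guided by a product lemma: \emph{if $(X_i, Y_i)$ is a co-minimal pair in an abelian group $G_i$ for $i = 1, \dots, k$, then $(X_1 \times \cdots \times X_k,\, Y_1 \times \cdots \times Y_k)$ is a co-minimal pair in $G_1 \times \cdots \times G_k$.} Covering is immediate. For minimality, given $(x_1, \dots, x_k) \in \prod_i X_i$, one-dimensional co-minimality provides a witness $y_i^\ast \in Y_i$ for each $i$ such that any decomposition $x_i' + y_i' = x_i + y_i^\ast$ in $X_i + Y_i$ forces $x_i' = x_i$. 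Then $(x_1 + y_1^\ast, \dots, x_k + y_k^\ast)$ has a unique representation in $\prod_i X_i + \prod_i Y_i$, so deleting $(x_1, \dots, x_k)$ from $\prod_i X_i$ loses this cover; the dual case is symmetric.

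Next, I would enumerate the eight elements of $\gln_2(\bbZ)$ with exactly two nonzero entries --- the diagonal $\mathrm{diag}(\epsilon_1, \epsilon_2)$ and anti-diagonal $\left(\begin{smallmatrix} 0 & a \\ b & 0 \end{smallmatrix}\right)$ matrices with $\epsilon_i, a, b \in \{\pm 1\}$ --- and for each choose $A = X \times Y$ from Theorems \ref{Thm:CoMinST}, \ref{Thm:CoMinUV}. The four anti-diagonal $\sigma$ are handled as in the remarks following Questions \ref{Qn:AExists}, \ref{Qn:AExistsQuadrant}: the symmetry $\calV = -\calV$ in Theorem \ref{Thm:CoMinUV} absorbs the sign changes for $A = \calU \times \calV$ or $\calV \times \calU$, while the one-sided supports $\calS \subseteq \bbZ_{\leq -1}$, $\calT \subseteq \bbZ_{\geq 1}$ in Theorem \ref{Thm:CoMinST} handle the remaining two via $A = (-\calS) \times \calT$ and its swap/negation variants. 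For diagonal $\sigma = \mathrm{diag}(\epsilon_1, \epsilon_2)$, the product lemma reduces the task to producing $X \subseteq \bbZ$ with $(X, \epsilon X)$ co-minimal for $\epsilon \in \{\pm 1\}$; Kwon's criterion covers $\epsilon = 1$ (take $X + X = \bbZ$ with $X$ avoiding $3$-term arithmetic progressions) and a symmetric variant treats $\epsilon = -1$.

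For the second statement, identify $\bbZ^{2d} = (\bbZ^2)^d$; taking $\sigma_0 \in \gln_2(\bbZ)$ with $A_0 \subseteq \bbZ^2$ from the previous step so that $(A_0, \sigma_0(A_0))$ is co-minimal, the block automorphism $\sigma = \sigma_0 \oplus \cdots \oplus \sigma_0 \in \gln_{2d}(\bbZ)$ and the set $A = A_0 \times \cdots \times A_0$ form a co-minimal pair by the product lemma. The property is invariant under conjugation: for any $\tau \in \gln_{2d}(\bbZ)$, the pair $(\tau(A),\, (\tau \sigma \tau^{-1})(\tau(A)))$ is again co-minimal. Since the $\gln_{2d}(\bbZ)$-conjugacy class of $\sigma$ is infinite --- for $\sigma_0 = \left(\begin{smallmatrix} 0 & 1 \\ -1 & 0 \end{smallmatrix}\right)$, for instance, there are already infinitely many $\tau \sigma_0 \tau^{-1} \in \gln_2(\bbZ)$ with trace zero and determinant one --- this produces infinitely many automorphisms of $\bbZ^{2d}$ admitting the required subset.

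The main obstacle I foresee is the diagonal case in the second step: the product decomposition reduces the problem to finding $X \subseteq \bbZ$ with $(X, \pm X)$ co-minimal, which lies strictly beyond Theorems \ref{Thm:CoMinST}, \ref{Thm:CoMinUV} and requires either Kwon's criterion combined with an explicit $3$-term-arithmetic-progression-free set of full sumset, or else a non-product subset $A \subseteq \bbZ^2$ adapted to $\sigma \in \{\pm I,\,\mathrm{diag}(\pm 1, \mp 1)\}$.
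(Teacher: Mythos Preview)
Your product lemma, its application to the four anti-diagonal matrices, and the case $\sigma = I$ via Kwon are all correct and essentially match the paper's argument. Your conjugacy argument for the second statement is also valid and is in fact more direct than the paper's route: the paper instead proves a quotient-based lifting (Proposition~\ref{Prop:ABCD}), which properly generalizes your product lemma to extensions rather than just direct products, and then uses it to handle block upper-triangular automorphisms with arbitrary off-diagonal blocks. Either device yields infinitely many $\sigma$.

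The genuine gap is exactly where you locate it. The clause ``a symmetric variant treats $\epsilon = -1$'' does no work: Kwon's criterion characterizes co-minimal pairs of the form $(X, X)$, not $(X, -X)$, and collapsing one to the other would require $X = -X$, whereupon you still owe a \emph{symmetric} $3$-AP-free subset of $\bbZ$ with $X + X = \bbZ$---not supplied by anything cited in the paper (and $\calV$ fails, since e.g.\ $11 \notin \calV + \calV$). The paper does not attempt this one-dimensional problem at all; for each of the three diagonal $\sigma$ carrying a $-1$ it abandons the product ansatz and writes down an explicit non-product set, for instance $A = (\bbZ \times \{0\}) \cup (\{0\} \times \bbZ_{\geq 1})$ for $\sigma = \mathrm{diag}(1, -1)$, verifying co-minimality of $(A, \sigma(A))$ directly. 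So of the two escape routes you list in your obstacle paragraph, the paper takes the second. To finish your argument you must either produce an explicit $X \subseteq \bbZ$ with $(X, -X)$ co-minimal, or construct and verify such ad hoc two-dimensional sets for the three remaining $\sigma$.
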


\section{A co-minimal pair involving a bounded below subset}

In this section, we establish Theorem \ref{Thm:CoMinST}, which follows from Theorem \ref{Thm:CoMinS}.
Consider the subsets $\{J_n\}_{n\geq 0}, \{K_n\}_{n\geq 0}, \{I_n\}_{n\geq 0}, \{\calI_n\}_{n\geq 0}, S$ of $\bbZ$ defined by 
\begin{align*}
J_n 
& = 
\begin{cases}
\{1\} & \text{ if } n = 0, \\
\{1\} & \text{ if } n = 1, \\
\{1, 2, \cdots, 2^{n-2}\} \cup (2^{n-1} + J_{n-1}) & \text{ if } n\geq 2,
\end{cases}
\\
K_n 
& = 
\begin{cases}
J_0 & \text{ if } n = 0, \\
J_1 & \text{ if } n = 1, \\
J_2 \setminus\{1\} & \text{ if } n = 2, \\
\{2^{n-3}+1, 2^{n-3}+2, 2^{n-3}+3, \cdots, 2^{n-2}\} \cup (2^{n-1} + 2^{n-2} + J_{n-2}) &  \text{ if }n\geq 3,
\end{cases}
\\
I_n 
& = K_n - (1 + 2^{n+1})\quad \text{ if } n\geq 0,
\\
\calI_n 
& = \begin{cases}
\{-2, -1\} & \text{ if } n = 0, \\
\{1, 2, 3, \cdots, 2^n\} - (1+2^{n+1}) & \text{ if } n \geq 1,
\end{cases}
\\
S 
& = \cup _{n\geq 0} I_n.
\end{align*}

\begin{proposition}
The set $\calT$ is an additive complement of $S$ in $\bbZ$. 
\end{proposition}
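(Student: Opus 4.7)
The plan is to establish the two inclusions $\bbZ_{\geq 0} \subseteq S + \calT$ and $\bbZ_{\leq -1} \subseteq S + \calT$, since $S + \calT \subseteq \bbZ$ is automatic. The nonnegative half falls out of a clean identity, while the negative half requires a case analysis driven by the recursive structure of $K_n$ and $J_n$.

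For $\bbZ_{\geq 0}$, the identity $I_n + 2^{n+1} = K_n - 1$ is immediate from $I_n = K_n - (1 + 2^{n+1})$, so $K_n - 1 \subseteq I_n + \calT \subseteq S + \calT$ for every $n \geq 0$. I then show $\bigcup_{n \geq 0}(K_n - 1) \supseteq \bbZ_{\geq 0}$: the element $0$ comes from $K_0 = \{1\}$, and for $n \geq 3$ the first component $\{2^{n-3}+1, \ldots, 2^{n-2}\}$ of $K_n$ contributes the interval $\{2^{n-3}, \ldots, 2^{n-2} - 1\}$ to $K_n - 1$. As $n$ runs over $\{3, 4, 5, \ldots\}$ these intervals are $\{1\}, \{2, 3\}, \{4, \ldots, 7\}, \{8, \ldots, 15\}, \ldots$, which partition $\bbZ_{\geq 1}$.

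For $\bbZ_{\leq -1}$, given $M \geq 1$ I must produce $n \geq 0$, $k \geq 0$, and $k_n \in K_n$ with $k_n + 2^k = 1 + 2^{n+1} - M$, so that $-M = (k_n - 1 - 2^{n+1}) + 2^k \in I_n + \calT$. My plan is a case analysis on $M$ using the decomposition $K_n = \{2^{n-3}+1, \ldots, 2^{n-2}\} \cup (2^{n-1}+2^{n-2}+J_{n-2})$ (for $n \geq 3$). For $M$ in the natural slab $\{2^a+1, \ldots, 2^{a+1}\}$ (so $-M \in \calI_a$), I first attempt $n = a$ and look for $k_a + 2^k = 1 + 2^{a+1} - M \in \{1, \ldots, 2^a\}$; failing that, I pass to $n = a+1$ or $n = a+2$, pulling $k_n$ from the second component $2^{n-1}+2^{n-2}+J_{n-2}$. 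The recursion $J_n = \{1, \ldots, 2^{n-2}\} \cup (2^{n-1}+J_{n-1})$ then lets the argument unwind by induction on the binary length of $M$, with the small cases $M \leq 9$ verified by direct inspection of $I_0, I_1, I_2, I_3$.

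The case analysis for the negative side is the main obstacle. No single $n$ works uniformly in $M$, and some ``gap'' values are covered only by passing to strictly larger $n$: for example, $M = 15$ is missed by $I_3 + \calT$, $I_4 + \calT$, and $I_5 + \calT$, and one must jump to $n = 6$ and take $k_6 = 50 = 2^5 + 2^4 + 2 \in 48 + J_4$, where the entry $2 \in J_4$ is essential. Producing a systematic rule that selects a valid $(n, k, k_n)$ for every $M$ with no omissions is the delicate combinatorial step where the precise design of $K_n$ — namely the omission of the first half $\{1, \ldots, 2^{n-3}\}$ of the left quarter that is present in $J_n$, combined with the retention of a copy of $J_{n-2}$ in the right quarter — is put to work.
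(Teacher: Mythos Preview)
Your treatment of $\bbZ_{\geq 0}$ is correct and is essentially the paper's argument: both use $I_n + 2^{n+1} = K_n - 1$ and the fact that the first components $\{2^{n-3}+1,\ldots,2^{n-2}\}$ of $K_n$ for $n\geq 3$ tile $\bbZ_{\geq 1}$.

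For $\bbZ_{\leq -1}$, however, what you have written is an outline of a plan, not a proof, and you yourself exhibit a value ($M=15$) that defeats the plan as stated: you propose trying $n\in\{a,a+1,a+2\}$ and then observe that $M=15$ (so $a=3$) requires $n=6=a+3$. The final paragraph acknowledges that ``producing a systematic rule \ldots is the delicate combinatorial step'' without supplying that rule. So the negative half is not established.

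The paper closes this gap by a structured decomposition of each $\calI_n$ into four pieces, with a specific source for each: the left half of $\calI_n$ comes from $I_{n+3}+2^{n+3}$ (this is the jump by $3$ you discovered empirically); the first and second quarters of the right half come from $I_{n+1}+2^n$ and $I_n+2^{n-1}$ respectively; and the right quarter comes from $(I_{n+1}+2^n)\cup(I_n+\{1,2,2^2,\ldots,2^{n-4}\})$. The last of these is the subtle case, and the paper isolates the key lemma needed there:
\[
J_m + \{0,1,2,2^2,\ldots,2^{m-2}\} \supseteq \{1,2,\ldots,2^m\}\quad\text{for all }m\geq 2,
\]
proved by induction on $m$ using the recursion for $J_m$. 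This lemma is exactly the ``systematic rule'' you were seeking, and without it (or an equivalent) your argument for the negative integers is incomplete. Small cases ($\calI_0,\calI_1$, the right half of $\calI_2$, the right quarter of $\calI_3$) are handled separately by inspection.
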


\begin{proof}
For any $m\geq 3$, 
$$\cup_{k = 3}^m (I_k + 2^{k+1})
\supseteq 
\cup_{k = 3}^m \{2^{k-3}, \cdots, 2^{k-2}-1\} = \{1, 2, \cdots, 2^{m-2}-1\}
$$
holds, 
which implies that $S + \calT$ contains $\bbZ_{\geq 1}$.
Note that $I_n$ is contained in $\calI_n$ for all $n\geq 0$, and the union of the sets $\{\calI_n\}_{n\geq 0}$ is equal to $\bbZ_{\leq -1}$ and these sets lie next to each other in the sense that $\min \calI_n = 1 + \max \calI_{n+1}$ for all $n\geq 0$. Thus, to prove that $S+\calT$ contains $\bbZ$, it suffices to show that $S + \calT$ contains $\{0\}$ and $\calI_n$ for all $n\geq 0$, which we establish by proving the following statements. 
\begin{enumerate}
\item $S + \calT$ contains the left half of $\calI_n$ for all $n\geq 1$, 
\item $S + \calT$ contains the left quarter of the right half of $\calI_n$ for all $n\geq 3$, 
\item $S + \calT$ contains the second quarter of the right half of $\calI_n$ for all $n\geq 3$, 
\item $S + \calT$ contains the right quarter of $\calI_n$ for all $n\geq 4$, 
\item $S + \calT$ contains $\{0\},\calI_0, \calI_1$, the right half of $\calI_2$ and the right quarter of $\calI_3$.
\end{enumerate}

Note that the sets $I_0 + \{1, 2\}$, $I_1 + \{1, 2\}$, $I_2 + \{1, 2\}$, $I_3 + 2^2$ contain $\{-1, 0\}$, $\{-3, -2\}$, $\{-5, -4\}$, $\{-6\}$ respectively. So, $S + \calT$ contains $\calI_1 \cup \calI_0 \cup \{0\}$ and the right half of $\calI_2$. Since $I_3 + 1$ contains $\{-9\}$ and $I_4 + 2^3$ contains $\{-10\}$, the set $S + \calT$ contains the right quarter of $\calI_3$. This establishes the fifth statement. 

Note that for any $n\geq 1$, the left half of $\calI_n$ is contained in $I_{n+3}+2^{n+3}$, i.e., the inclusions 
\begin{align*}
I_{n+3} + 2^{n+3} 
& = K_{n+3} - ( 1 + 2^{n+4}) + 2^{n+3} \\
&= K_{n+3} - (1 + 2^{n+3})\\
& \supseteq (2^{n+2} + 2^{n+1} + J_{n+1}) - (1 + 2^{n+3})\\
&= J_{n+1}- (1 + 2^{n+1})\\
& \supseteq \{1, 2, 3, \cdots, 2^{n-1}\}  - (1 + 2^{n+1})
\end{align*}
hold, thus establishing the first statement. 

Note that for any $n\geq 3$, the left quarter of the right half of $\calI_n$ is contained in $I_{n+1} + 2^{n}$, i.e., the inclusions
\begin{align*}
I_{n+1} + 2^{n} 
& = K_{n+1} - ( 1 + 2^{n+2}) + 2^{n} \\
&= K_{n+1} - (1 + 2^{n+2}) + 2^n\\
& \supseteq (2^{n} + 2^{n-1} + J_{n-1}) - (1 + 2^{n+2})+2^n\\
&= (2^{n-1} + J_{n-1})- (1 + 2^{n+2}) + 2^{n+1}\\
&= (2^{n-1} + J_{n-1})- (1 + 2^{n+1}) \\
& \supseteq \{2^{n-1}+1, 2^{n-1}+2, 2^{n-1}+3, \cdots, 2^{n-1}+2^{n-3}\}  - (1 + 2^{n+1})
\end{align*}
hold, 
the second quarter of the right half of $\calI_n$ is contained in $I_n + 2^{n-1}$, i.e., the inclusions
\begin{align*}
I_n + 2^{n-1}
& = K_n - (1 + 2^{n+1}) + 2^{n-1} \\
& \supseteq \{2^{n-3}+1, 2^{n-3}+2, 2^{n-3}+3, \cdots, 2^{n-2}\}  - (1 + 2^{n+1}) + 2^{n-1} \\
& = \{2^{n-1}+2^{n-3}+1, 2^{n-1}+2^{n-3}+2, 2^{n-1}+2^{n-3}+3, \cdots, 2^{n-1}+2^{n-2}\}  - (1 + 2^{n+1}) 
\end{align*}
hold. This establishes the second and the third statement.

For $n\geq 3$, the points in the right quarter of $\calI_n$ that lie $I_n$ are contained in $I_{n+1}+2^n$, i.e., the inclusions
\begin{align*}
I_{n+1} + 2^{n} 
&= (2^{n-1} + J_{n-1})- (1 + 2^{n+1}) \\
& \supseteq (2^{n-1}+2^{n-2}+J_{n-2})  - (1 + 2^{n+1})
\end{align*}
hold. Note that for any $m\geq 2$, 
$$J_m + \{0, 1, 2, 2^2, \cdots, 2^{m-2}\} \supseteq \{1, 2, 3, \cdots, 2^m\}$$
holds. Indeed, it holds for $m = 2$, and assuming that 
$$J_k + \{0, 1, 2, 2^2, \cdots, 2^{k-2}\} \supseteq \{1, 2, 3, \cdots, 2^k\}$$
holds for some integer $k\geq 2$, it follows that the inclusions 
\begin{align*}
& J_{k+1} + \{0, 1, 2, 2^2, \cdots, 2^{k-2}, 2^{k-1}\} \\
& \supseteq (\{1, 2, \cdots, 2^{k-1}\} \cup (2^{k} + J_{k})) + \{0, 1, 2, 2^2, \cdots, 2^{k-2}, 2^{k-1}\} \\
& \supseteq (\{1, 2, \cdots, 2^{k-1}\}  + \{0, 2^{k-1}\}) \cup ((2^{k} + J_{k}) + \{0, 1, 2, 2^2, \cdots, 2^{k-2}\}) \\
& = \{1, 2, \cdots, 2^{k}\} \cup (2^{k} + (J_{k} + \{0, 1, 2, 2^2, \cdots, 2^{k-2}\})) \\
& \supseteq \{1, 2, \cdots, 2^{k}\} \cup (2^{k} + \{1, 2, 3, \cdots, 2^k\}) \\
& = \{1, 2, \cdots, 2^{k}\} \cup  \{2^{k} +1,2^{k} + 2, 2^{k} +3, \cdots, 2^{k+1}\} \\
& = \{1, 2, \cdots, 2^{k+1}\} 
\end{align*}
hold. Consequently, for any $n\geq 4$, the points in the right quarter of $\calI_n$ are contained in $(I_{n+1} + 2^n) \cup ( I_n  + \{1, 2, 2^2, \cdots, 2^{n-4}\})$, i.e., the inclusions
\begin{align*}
& (I_{n+1} + 2^n) \cup ( I_n  + \{1, 2, 2^2, \cdots, 2^{n-4}\}) \\
& \supseteq 
((2^{n-1}+2^{n-2}+J_{n-2})  - (1 + 2^{n+1})) 
\cup 
(((2^{n-1}+2^{n-2}+J_{n-2})  - (1 + 2^{n+1}))+ \{1, 2, 2^2, \cdots, 2^{n-4}\})\\
& = ((2^{n-1}+2^{n-2}+J_{n-2})  - (1 + 2^{n+1}))+ \{0, 1, 2, 2^2, \cdots, 2^{n-4}\}\\
& = (2^{n-1}+2^{n-2}+(J_{n-2}+ \{0, 1, 2, 2^2, \cdots, 2^{n-4}\}))  - (1 + 2^{n+1})\\
& \supseteq (2^{n-1}+2^{n-2}+\{1, 2, 3, \cdots, 2^{n-2}\})  - (1 + 2^{n+1})\\
& = \{2^{n-1}+2^{n-2}+1, 2^{n-1}+2^{n-2}+2, 2^{n-1}+2^{n-2}+3, \cdots, 2^{n}\}  - (1 + 2^{n+1})
\end{align*}
hold. This proves the fourth statement. So, the set $S + \calT$ contains $\bbZ$. 
\end{proof}

In the above proof, we established that for any $n\geq 3$, the left quarter of the right half of $\calI_n$ is contained in $I_{n+1} + 2^{n}$, and the second quarter of the right half of $\calI_n$ is contained in $I_n + 2^{n-1}$. In fact, the points of $I_{n+1}$ (which are precisely the points in the left quarter of the right quarter of $\calI_{n+1}$) that yield the left quarter of the right half of $\calI_n$ and the points of $I_n$ (which are precisely the points in the second quarter of the left half of $\calI_n$) that yield the second quarter of the right half of $\calI_n$ cannot be removed from $S$ in a  sense made precise in the following result. 

\begin{theorem}
\label{Thm:RequiredClusters}
Let $\scrS, \scrT$ be nonempty subsets of $S, \calT$ respectively such that $\scrS + \scrT = \bbZ$. Then for $n\geq 3$, the set $\scrT$ contains $2^{n-1}$, $\scrS$ contains the points in the second quarter of the left half of $\calI_n$, i.e.,
\begin{equation}
\label{Eqn:LeftHalf}
\scrS \supseteq \{2^{n-3} + 1, 2^{n-3} + 2, 2^{n-3} + 3, \cdots, 2^{n-2}\} - (1 + 2^{n+1})
\quad 
\text{ for }
n\geq 3,
\end{equation}
and the points in the left quarter of the right quarter of $\calI_{n+1}$, i.e.,  
\begin{equation}
\label{Eqn:LeftQuarDeRightQuar}
\scrS \supseteq 
\{2^{n}+ 2^{n-1} + 1, 2^{n}+ 2^{n-1} + 2, 2^{n}+ 2^{n-1} + 3, \cdots, 2^{n}+ 2^{n-1} + 2^{n-3}\}  - (1 + 2^{n+2})
\quad 
\text{ for }
n\geq 3.
\end{equation}
Moreover, $\scrT$ contains $1, 2$, $\scrS$ contains $-2, -4$. Consequently, $\calT$ is a minimal complement of $S$. 
\end{theorem}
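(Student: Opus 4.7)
The plan is to prove each listed inclusion by producing, for every element claimed to lie in $\scrS$ or $\scrT$, an integer $y \in \bbZ$ whose representation $y = s + 2^k$ with $s \in S$, $2^k \in \calT$ is \emph{unique}; since $\scrS + \scrT = \bbZ$, that unique pair $(s, 2^k)$ is forced into $\scrS \times \scrT$. Taking $\scrS = S$ at the end shows that every $\scrT \subseteq \calT$ with $S + \scrT = \bbZ$ equals $\calT$, which gives the minimality statement.

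For (\ref{Eqn:LeftHalf}) together with $2^{n-1} \in \scrT$ (fixed $n \geq 3$), take $y = (2^{n-1} + r) - (1 + 2^{n+1})$ for an arbitrary $r \in \{2^{n-3}+1, \ldots, 2^{n-2}\}$, a generic point of the second quarter of the right half of $\calI_n$. For each candidate exponent $k \geq 0$, locate the unique index $m$ with $y - 2^k \in \calI_m$ and compute $v_k := (y - 2^k) + (1 + 2^{m+1})$; the representation is valid precisely when $v_k \in K_m$. The cases are: (a) $0 \leq k \leq n - 2$ gives $m = n$ and $v_k = r + 2^{n-1} - 2^k$, which lies in the gap $\{2^{n-2}+1, \ldots, 2^{n-1}+2^{n-2}\}$ between the two clusters of $K_n$; (b) $k = n - 1$ gives $m = n$ and $v_k = r$, which lies in the first cluster $\{2^{n-3}+1, \ldots, 2^{n-2}\}$ of $K_n$, so this is the unique valid case and contributes $y - 2^{n-1}$ to the first cluster of $I_n$; (c) $k = n$ gives $m = n+1$ and $v_k = 2^n + 2^{n-1} + r$, for which membership in the second cluster of $K_{n+1}$ would require $r \in J_{n-1}$, but $r$ lies exactly in the gap $\{2^{n-3}+1, \ldots, 2^{n-2}\}$ of $J_{n-1} = \{1,\ldots,2^{n-3}\} \cup (2^{n-2} + J_{n-2})$; (d) $k = n+1$ gives $m = n+1$ and $v_k = r + 2^{n-1}$, again in the gap of $K_{n+1}$; (e) $k = n + j$ with $j \geq 2$ gives $m = n+j$, and iterating the recursion $J_\ell = \{1, \ldots, 2^{\ell-2}\} \cup (2^{\ell-1} + J_{\ell-1})$ reduces the question $v_k \in K_{n+j}$ to $r + 2^{n-1} \in J_{n+1}$, which fails because $r + 2^{n-1}$ lies in the gap $\{2^{n-1}+1, \ldots, 2^n\}$ of $J_{n+1}$. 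A structurally identical argument with $y$ in the left quarter of the right half of $\calI_n$ establishes (\ref{Eqn:LeftQuarDeRightQuar}) and $2^n \in \scrT$; the unique valid exponent is now $k = n$ and the forced element of $\scrS$ is the corresponding point of the left quarter of the right quarter of $\calI_{n+1}$.

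For the low-index inclusions I use specific witnesses. The integer $y = -1$ has unique representation $-2 + 1$ (indeed $-1 - 2^k \in \calI_k$ with shifted value $2^k$, and $2^k \in K_k$ only for $k = 0$ since $\max K_\ell = 2^\ell - 1$ for $\ell \geq 1$), forcing $1 \in \scrT$ and $-2 \in \scrS$; an analogous check for $y = -3$ forces $-4 \in \scrS$; and $y = -37$ has the unique representation $-39 + 2$ with $-39 \in I_5$ corresponding to $26 = 2^4 + 2^3 + 2 \in K_5$, where uniqueness reduces to $2^m - 36 \notin J_m$, true by recursion from the base $2^5 - 36 = -4 \notin J_5$, forcing $2 \in \scrT$. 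The main obstacle is case (e) of the generic analysis: verifying infinitely many shifts cleanly requires the structural observation that the gap of $J_{n-1}$ containing $r$ propagates, via the above recursion, to a corresponding gap of $J_{n+1}$ containing $r + 2^{n-1}$; once this gap-propagation is set up, every remaining case collapses.
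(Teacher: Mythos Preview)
Your proof is correct and relies on the same recursive gap structure of $J_n$ and $K_n$ that the paper uses, but the organization is genuinely different. The paper fixes the target region (e.g.\ the second quarter of the right half of $\calI_n$) and then runs a case analysis over the index $m$ of $I_m$: first it shows that $\bigcup_{m\neq n,n+1} I_m + \calT$ misses the whole right half of $\calI_n$, then that $I_{n+1}+\calT$ misses the second quarter of that half, and finally that $I_n + (\calT\setminus\{2^{n-1}\})$ misses it too; the forced inclusions for $\scrS$ and $\scrT$ are then read off separately. You instead fix a single witness integer $y$ and run the case analysis over the exponent $k$, computing for each $k$ the unique $m$ with $y-2^k\in\calI_m$ and testing membership of the shifted value in $K_m$. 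Your ``unique representation $\Rightarrow$ forced pair in $\scrS\times\scrT$'' framing is clean because it yields both the $\scrS$- and $\scrT$-inclusions in one stroke, and the gap-propagation observation you isolate in case~(e) is exactly the content of the paper's Equation~(\ref{Eqn:RightHalfNotObtainedLHS1pt}) rewritten in the recursive language of $J_\ell$.

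Two small points of precision. First, in case~(e) with $j=2$ the iteration does not literally bottom out at $r+2^{n-1}\in J_{n+1}$: one has $v_k = 2^{n+1}+2^{n-1}+r$, which lies in the gap $\{2^n+1,\ldots,2^{n+1}+2^n\}$ of $K_{n+2}$ directly (equivalently, the second-cluster test gives $r-2^{n-1}<0\notin J_n$). Only for $j\ge 3$ does the recursion genuinely reduce to $r+2^{n-1}\in J_{n+1}$. Second, your ``analogous check for $y=-3$'' and the witness $y=-37$ both require the same kind of one-line recursion ($2^\ell - 2\notin J_\ell$, respectively $2^\ell-36\notin J_\ell$, by descent on $\ell$) that you spell out only for the latter; it would be worth stating once that $x\notin J_{\ell_0}$ and $x>2^{\ell-2}$ for all $\ell\ge\ell_0$ together imply $2^{\ell}-2^{\ell_0}+x\notin J_\ell$ for all $\ell\ge\ell_0$, which covers all three of your recursive checks uniformly. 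The paper, incidentally, uses $y=-38$ rather than $-37$ for the $2\in\scrT$ witness, but either works.
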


\begin{proof}
We claim that for any $n\geq 2$, no point in the right half of $\calI_n$ lie in $\cup _{m \neq n, n+1}I_m + \calT$, i.e., 
\begin{equation}
\label{Eqn:RightHalfNotObtained}
\left(
\{2^{n-1}+1, 2^{n-1}+2, 2^{n-1}+3, \cdots, 2^n\} - (1 + 2^{n+1})
\right) 
\cap 
\left(
\cup _{m \neq n, n+1}I_m + \calT
\right)
= \emptyset
\quad
\text{ for } 
n\geq 2.
\end{equation}
Since the inclusions
\begin{align*}
(\cup _{0 \leq m < n} I_m) + \calT
& \subseteq 
(\cup _{0 \leq m < n} \calI_m) + \calT\\
& \subseteq 
\{- 2^n,-2^n+1, -2^n+2, -2^n + 3, \cdots, -1\} + \calT
\end{align*}
hold and the elements of $\calT$ are positive, it follows that the smallest element of $(\cup _{0 \leq m < n} I_m) + \calT$ is greater than $-2^n$, which is greater than the largest element of $\{2^{n-1}+1, 2^{n-1}+2, 2^{n-1}+3, \cdots, 2^n\} - (1 + 2^{n+1})$, and hence 
\begin{equation}
\label{Eqn:RightHalfNotObtainedRHS}
\left(
\{2^{n-1}+1, 2^{n-1}+2, 2^{n-1}+3, \cdots, 2^n\} - (1 + 2^{n+1})
\right) 
\cap 
\left(
\cup _{0 \leq m < n}I_m + \calT
\right) 
= \emptyset
\quad
\text{ for } 
n\geq 1.
\end{equation}
For any $n\geq 2$, the inclusion $I_n \subseteq \calI_n = \{1, 2, 3, \cdots, 2^n\}- (1 + 2^{n+1})$ yields that $I_n \cap \bbZ_{\geq 1} = \emptyset, I_n \cap \bbZ_{\leq - 2^{n+1} - 1} = \emptyset$, and for any $m\geq n+2$, the inclusions
\begin{align*}
I_m + 2^k 
& \subseteq 
\calI_m + 2^k \\
& \subseteq 
(\{1, 2, 3, \cdots, 2^m\} - (1 + 2^{m+1})) + 2^k \\
& = 
\{1, 2, 3, \cdots, 2^m\}  + 2^k-  2^{m+1} - 1\\
& \subseteq \bbZ_{\geq 1}
\end{align*}
hold for any $k\geq m+1$, and the inclusions
\begin{align*}
I_m + 2^k 
& \subseteq 
\bbZ_{\leq 2^m - (1 + 2^{m+1})} + 2^k \\
& = \bbZ_{\leq 2^m - (1 + 2^{m+1}) + 2^k } \\
& = \bbZ_{\leq -1-2^m + 2^k} \\
& \subseteq 
\bbZ_{\leq  - 1 - 2^m + 2^{m-1}} \\
& \subseteq 
\bbZ_{\leq  - 1 - 2^{m-1}} \\
&\subseteq
\bbZ_{\leq - 2^{n+1} - 1}
\end{align*}
hold for $0 \leq k < m$, and hence 
\begin{equation}
\label{Eqn:RightHalfNotObtainedLHSSans1pt}
\left(
\{2^{n-1}+1, 2^{n-1}+2, 2^{n-1}+3, \cdots, 2^n\} - (1 + 2^{n+1})
\right) 
\cap 
\left(
\cup_{m \geq n+2} (I_m + (\calT \setminus \{2^m\}))
\right)
= \emptyset
\quad
\text{ for } 
n\geq 2.
\end{equation}
For any $m\geq 3$, the inclusions 
\begin{align*}
I_m + 2^m 
& \subseteq 
(\{2^{m-3}+1, 2^{m-3}+2, 2^{m-3}+3, \cdots, 2^{m-2}\} \cup (2^{m-1} + 2^{m-2} + J_{m-2})) - (1 + 2^{m+1}) + 2^m \\
& \subseteq 
(J_m \cup (2^{m-1} + 2^{m-2} + J_{m-2})) - (1 + 2^m) \\
&\subseteq 
(J_m \cup (2^{m-1} + J_{m-1})) - (1 + 2^m) \\
&= 
J_m  - (1 + 2^m) \\
& = 
(\{1, 2, \cdots, 2^{m-2}\} - (1 + 2^m))
\cup
\cdots 
\cup 
(\{1, 2, 3, 2^2\} - (1+2^4))
\cup
(\{1, 2\} - (1+2^3)) \\
& \qquad \cup 
(\{1\} - (1+2^2))
\cup 
(\{1\} - ( 1+ 2)) \\
& = 
\left(
\cup_{k=0}^{m-2} (\{1, 2, 3, \cdots, 2^k\} - (1 + 2^{k+2}))
\right) 
\cup 
(\{1\}-(1+2))
\end{align*}
hold, which implies that for $n\geq 2$ and $m\geq n+1$, the inclusions 
\begin{align*}
(I_m + 2^m) \cap \calI_n 
& = 
(I_m + 2^m) \cap (\{1, 2, 3, \cdots, 2^n\} - (1 + 2^{n+1}))\\
& \subseteq 
\left(
\left(
\cup_{k=0}^{m-2} (\{1, 2, 3, \cdots, 2^k\} - (1 + 2^{k+2}))
\right) 
\cup 
(\{1\}-(1+2))
\right) \cap \calI_n \\
& = 
\left(
\left(
\cup_{k=1}^{m-1} (\{1, 2, 3, \cdots, 2^{k-1}\} - (1 + 2^{k+1}))
\right) 
\cup 
(\{1\}-(1+2))
\right) \cap \calI_n \\
& = 
\{1, 2, 3, \cdots, 2^{n-1}\} - (1 + 2^{n+1}),
\end{align*}
hold, which yields
\begin{equation}
\label{Eqn:RightHalfNotObtainedLHS1pt}
\left(
\{2^{n-1}+1, 2^{n-1}+2, 2^{n-1}+3, \cdots, 2^n\} - (1 + 2^{n+1})
\right) 
\cap 
\left(
\cup_{m \geq n+1} (I_m + 2^m)
\right)
= \emptyset
\quad
\text{ for } 
n\geq 2.
\end{equation}
Consequently, Equation \eqref{Eqn:RightHalfNotObtained} follows from Equations \eqref{Eqn:RightHalfNotObtainedRHS}, \eqref{Eqn:RightHalfNotObtainedLHSSans1pt}, \eqref{Eqn:RightHalfNotObtainedLHS1pt}.

We claim that for any $n\geq 3$, no point in the second quarter of the right half of $\calI_n$ lie in $I_{n+1} + \calT$, i.e., 
\begin{equation}
\label{Eqn:2ndQuarDeRightHalfNotObtained}
\left(
\{
2^{n-1}+2^{n-3}+1, 2^{n-1}+2^{n-3}+2, 2^{n-1}+2^{n-3}+3,\cdots, 
2^{n-1} + 2^{n-2}
\} - (1 + 2^{n+1})
\right) 
\cap 
\left(
I_{n+1} + \calT
\right)
= \emptyset.
\end{equation}
This claim follows since for $n\geq 3$, the inclusions 
\begin{align*}
I_{n+1} + 2^k
& \subseteq 
\calI_{n+1} + 2^k\\
& \subseteq 
\bbZ_{\leq 2^{n+1} - (1 + 2^{n+2}) + 2^k} \\
& \subseteq 
\bbZ_{\leq 2^k- (1 + 2^{n+1})}\\
& \subseteq 
\bbZ_{\leq 2^{n-1}- (1 + 2^{n+1})}
\end{align*}
hold for $0 \leq k \leq n-1$, the inclusions 
\begin{align*}
I_{n+1} + 2^n
& = 
K_{n+1} - (1 + 2^{n+2}) + 2^n \\
& \subseteq 
\left(
\{2^{n-2}+1, 2^{n-2}+2, \cdots, 2^{n-1}\} \cup (2^n + 2^{n-1} + J_{n-1})
\right)
 - (1 + 2^{n+2}) + 2^n \\
& = 
\left(
\{2^{n-2}+1, 2^{n-2}+2, \cdots, 2^{n-1}\}  - (1 + 2^{n+2}) + 2^n 
\right)
\cup 
\left(
(2^n + 2^{n-1} + J_{n-1}) - (1 + 2^{n+2}) + 2^n 
\right) \\
& \subseteq 
\bbZ_{\leq 2^{n-1}  - (1 + 2^{n+2}) + 2^n}
\cup 
\left(
(2^n + 2^{n-1} + J_{n-1}) - (1 + 2^{n+2}) + 2^n 
\right) \\
& \subseteq 
\bbZ_{\leq 2^{n}  - (1 + 2^{n+2}) + 2^n}
\cup 
\left(
(2^n + 2^{n-1} + 
\left(
\{1, 2, \cdots, 2^{n-3}\}
\cup 
(2^{n-2} + J_{n-2})
\right)
) - (1 + 2^{n+2}) + 2^n 
\right) \\
& \subseteq 
\bbZ_{\leq  - (1 + 2^{n+1}) }
\cup 
\bbZ_{\leq 2^n + 2^{n-1} + 2^{n-3} - (1 + 2^{n+2}) + 2^n} 
\cup 
\bbZ_{\geq 2^n + 2^{n-1} + 2^{n-2} + 1 - (1 + 2^{n+2}) + 2^n}  \\
& \subseteq 
\bbZ_{\leq 2^{n}  - (1 + 2^{n+2}) + 2^n}
\cup 
\left(
(2^n + 2^{n-1} + 
\left(
\{1, 2, \cdots, 2^{n-3}\}
\cup 
(2^{n-2} + J_{n-2})
\right)
) - (1 + 2^{n+2}) + 2^n 
\right) \\
& \subseteq 
\bbZ_{\leq  - (1 + 2^{n+1}) }
\cup 
\bbZ_{\leq 2^{n-1} + 2^{n-3} - (1 + 2^{n+1})}
\cup 
\bbZ_{\geq 2^{n-1} + 2^{n-2} + 1 - (1 + 2^{n+1})}
\end{align*}
hold, 
and the inclusions 
\begin{align*}
I_{n+1} + 2^k 
& \subseteq 
\calI_{n+1} + 2^k \\
& \subseteq 
\bbZ_{\geq 1 - (1 + 2^{n+2}) + 2^k}\\
& = 
\bbZ_{\geq 2^k - 2^{n+2}}\\
& \subseteq 
\bbZ_{\geq 0}
\end{align*}
hold for $k \geq n+2$. 

We claim that for any $n\geq 3$, no point in the second quarter of the right half of $\calI_n$ lie in $I_n + (\calT\setminus \{2^{n-1}\})$, i.e., 
\begin{equation}
\label{Eqn:2ndQuarDeRightHalfNotObtainedIn}
\left(
\{
2^{n-1}+2^{n-3}+1, 2^{n-1}+2^{n-3}+2, \cdots, 
2^{n-1} + 2^{n-2}
\} - (1 + 2^{n+1})
\right) 
\cap 
\left(
I_n + (\calT\setminus \{2^{n-1}\})
\right)
= \emptyset.
\end{equation}
This claim follows since for $n\geq 3$, 
$$I_n = 
(\{
2^{n-3}+1, 2^{n-3}+2, 2^{n-3}+3, \cdots, 2^{n-2}
\} - (1 + 2^{n+1}))
\cup 
((2^{n-1}+ 2^{n-2} + J_{n-2}) - (1 + 2^{n+1})),
$$
the inclusions 
\begin{align*}
((2^{n-1}+ 2^{n-2} + J_{n-2}) - (1 + 2^{n+1})) + \scrT
& \subseteq 
\bbZ_{\geq 2^{n-1}+2^{n-2}+ 1 - (1 + 2^{n+1})} + \bbZ_{\geq 1}\\
& \subseteq 
\bbZ_{\geq 2^{n-1}+2^{n-2}+ 1 - (1 + 2^{n+1})} 
\end{align*}
hold, the inclusions 
\begin{align*}
& 
(\{2^{n-3}+1, 2^{n-3}+2, 2^{n-3}+3, \cdots, 2^{n-2}\}   - (1 + 2^{n+1}))+ 2^k 
\\
& \subseteq 
\bbZ_{\geq 2^{n-3} + 1 - (1 + 2^{n+1}) + 2^k}\\
& \subseteq 
\bbZ_{\geq 2^{n-3} + 1 - (1 + 2^{n+1}) + 2^n}\\
& = 
\bbZ_{\geq 2^{n-3} + 1 - (1 + 2^{n+1}) + 2^{n-1} + 2^{n-2} + 2^{n-2} }\\
& \subseteq 
\bbZ_{\geq 2^{n-1} + 2^{n-2} + 1 - (1 + 2^{n+1})} 
\end{align*}
hold for $k\geq n$, and the inclusions 
\begin{align*}
& 
(\{2^{n-3}+1, 2^{n-3}+2, 2^{n-3}+3, \cdots, 2^{n-2}\}   - (1 + 2^{n+1}))+ 2^k 
\\
& \subseteq 
\bbZ_{\leq 2^{n-2} - (1 + 2^{n+1}) + 2^k}\\
& \subseteq 
\bbZ_{\leq 2^{n-2} - (1 + 2^{n+1}) + 2^{n-2}}\\
& = 
\bbZ_{\leq 2^{n-1} - (1 + 2^{n+1}) }\\
& \subseteq 
\bbZ_{\leq 2^{n-1} + 2^{n-3} - (1 + 2^{n+1}) }
\end{align*}
hold for $k\leq  n-2$.

We now show that Equation \eqref{Eqn:LeftHalf} holds. From Equations \eqref{Eqn:RightHalfNotObtained}, \eqref{Eqn:2ndQuarDeRightHalfNotObtained}, 
it follows that 
$$(\scrS \cap I_n) + \scrT 
\supseteq 
\{
2^{n-1}+2^{n-3}+1, 2^{n-1}+2^{n-3}+2, 2^{n-1}+2^{n-3}+3,\cdots, 
2^{n-1} + 2^{n-2}
\} - (1 + 2^{n+1}).$$
Using Equation \eqref{Eqn:2ndQuarDeRightHalfNotObtainedIn}, it follows that $2^{n-1} \in \scrT$ and 
$$(\scrS \cap I_n) + 2^{n-1} 
\supseteq 
\{
2^{n-1}+2^{n-3}+1, 2^{n-1}+2^{n-3}+2, 2^{n-1}+2^{n-3}+3,\cdots, 
2^{n-1} + 2^{n-2}
\} - (1 + 2^{n+1}).$$
Thus Equation \eqref{Eqn:LeftHalf} holds. 

We claim that for any $n\geq 3$, no point in the left quarter of the right half of $\calI_n$ lie in $I_n+ \calT$, i.e., 
\begin{equation}
\label{Eqn:1stQuarDeRightHalfNotObtainedIn}
\left(
\{
2^{n-1} + 1, 2^{n-2} + 2, \cdots, 2^{n-1}+2^{n-3}
\} - (1 + 2^{n+1})
\right) 
\cap 
\left(
I_n + \calT
\right)
= \emptyset
\quad
\text{ for } 
n\geq 3.
\end{equation}
This claim follows since for $n\geq 3$, the inclusions 
\begin{align*}
I_n + 2^k 
& \subseteq 
K_n - (1 + 2^{n+1}) + 2^k \\
& \subseteq 
\left( 
\{2^{n-3} + 1, 2^{n-3} + 2, \cdots, 2^{n-2} \} \cup (2^{n-1} + 2^{n-2} + J_{n-2} ) 
\right) 
 - (1 + 2^{n+1}) + 2^k \\
& \subseteq 
\left( 
\bbZ_{\leq 2^{n-2}} \cup \bbZ_{\geq 2^{n-1} + 2^{n-2} + 1} 
\right) 
 - (1 + 2^{n+1}) + 2^k \\
& = 
\bbZ_{\leq 2^{n-2}  - (1 + 2^{n+1}) + 2^k} \cup \bbZ_{\geq 2^{n-1} + 2^{n-2} + 1 - (1 + 2^{n+1}) + 2^k}  \\
& \subseteq 
\bbZ_{\leq 2^{n-2}  - (1 + 2^{n+1}) + 2^{n-2}} \cup \bbZ_{\geq 2^{n-1} + 2^{n-2} + 1 - (1 + 2^{n+1}) }  \\
& \subseteq 
\bbZ_{\leq 2^{n-1}  - (1 + 2^{n+1}) } \cup \bbZ_{\geq 2^{n-1} + 2^{n-2} + 1 - (1 + 2^{n+1}) }  
\end{align*}
hold for $k\leq n-2$, and the inclusions 
\begin{align*}
I_n + 2^k 
& = 
K_n - (1 + 2^{n+1}) + 2^k \\
& \subseteq 
\left( 
\{2^{n-3} + 1, 2^{n-3} + 2, \cdots, 2^{n-2} \} \cup (2^{n-1} + 2^{n-2} + J_{n-2} ) 
\right) 
 - (1 + 2^{n+1}) + 2^k \\
& \subseteq 
\left( 
\bbZ_{\geq 2^{n-3}+1} \cup \bbZ_{\geq 2^{n-1} + 2^{n-2} + 1} 
\right) 
 - (1 + 2^{n+1}) + 2^k \\
& \subseteq 
\bbZ_{\geq 2^{n-3}+1}
 - (1 + 2^{n+1}) + 2^k \\
& = 
\bbZ_{\geq 2^{n-3}+1
 - (1 + 2^{n+1}) + 2^k} \\
& \subseteq 
\bbZ_{\geq 2^{n-3}+1
 - (1 + 2^{n+1}) + 2^{n-1}} \\
& = 
\bbZ_{\geq 2^{n-1} +  2^{n-3}+1
 - (1 + 2^{n+1}) } 
\end{align*}
hold for $k\geq n-1$.

We claim that for any $n\geq 3$, no point in the left quarter of the right half of $\calI_n$ lie in $I_{n+1} + (\calT\setminus \{2^n\})$, i.e., 
\begin{equation}
\label{Eqn:1stQuarDeRightHalfNotObtained}
\left(
\{
2^{n-1} + 1, 2^{n-1} + 2, \cdots, 2^{n-1}+2^{n-3}
\} - (1 + 2^{n+1})
\right) 
\cap 
\left(
I_{n+1} + (\calT\setminus \{2^n\})
\right)
= \emptyset.
\end{equation}
This claim follows since for $n\geq 3$, the inclusions 
\begin{align*}
I_{n+1} + 2^k 
& \subseteq 
\calI_{n+1} + 2^k  \\
& = 
(\{1, 2, 3, \cdots, 2^{n+1} \} - ( 1+ 2^{n+2} ) ) + 2^k  \\
& \subseteq 
\bbZ_{\geq 1 - ( 1+ 2^{n+2} ) + 2^k} \\
& = 
\bbZ_{\geq 2^k -  2^{n+2} }\\
& \subseteq 
\bbZ_{\geq 0}
\end{align*}
hold for $k \geq n+2$, the inclusions 
\begin{align*}
I_{n+1} + 2^{n+1} 
& = 
K_{n+1} - (1 + 2^{n+2}) + 2^{n+1}  \\
& = 
\left( 
\{2^{n-2} + 1, 2^{n-2} + 2, \cdots, 2^{n-1} \} \cup (2^{n} + 2^{n-1} + J_{n-1} ) 
\right) 
 - (1 + 2^{n+2}) + 2^{n+1}  \\
& = 
\left( 
\{2^{n-2} + 1, 2^{n-2} + 2, \cdots, 2^{n-1} \} \cup (2^{n} + 2^{n-1} + J_{n-1} ) 
\right) 
 - (1 + 2^{n+1}) \\
& \subseteq 
\left(
\bbZ_{\leq 2^{n-1} } \cup \bbZ_{\geq 2^n + 2^{n-1} + 1} 
\right)
  - (1 + 2^{n+1}) \\
& \subseteq 
 \bbZ_{\leq 2^{n-1}   - (1 + 2^{n+1}) } \cup \bbZ_{\geq 2^n  +1 - (1 + 2^{n+1}) }
\end{align*}
hold, and the inclusions 
\begin{align*}
I_{n+1} + 2^k 
& \subseteq 
\calI_{n+1} + 2^k \\
& = 
(\{1, 2, 3, \cdots, 2^{n+1} \} - ( 1+ 2^{n+2} ) ) + 2^k  \\
& \subseteq 
\bbZ_{\leq 2^{n+1} - ( 1+ 2^{n+2} ) + 2^k} \\
& = 
\bbZ_{\leq 2^k - ( 1+ 2^{n+1} )} \\
& \subseteq 
\bbZ_{\leq 2^{n-1} - ( 1+ 2^{n+1} )} 
\end{align*}
hold for $k\leq n-1$.

We now show that Equation \eqref{Eqn:LeftQuarDeRightQuar} holds. From Equations \eqref{Eqn:RightHalfNotObtained}, \eqref{Eqn:1stQuarDeRightHalfNotObtainedIn}, it follows that 
$$(\scrS \cap I_{n+1}) + \scrT 
\supseteq 
\{
2^{n-1}+1, 2^{n-1}+2, 2^{n-1}+3,\cdots, 
2^{n-1} + 2^{n-3}
\} - (1 + 2^{n+1}).$$
Using Equation \eqref{Eqn:1stQuarDeRightHalfNotObtained}, it follows that $2^n \in \scrT$ and 
$$(\scrS \cap I_{n+1}) + 2^n
\supseteq 
\{
2^{n-1}+1, 2^{n-1}+2, 2^{n-1}+3,\cdots, 
2^{n-1} + 2^{n-3}
\} - (1 + 2^{n+1}).$$
So the inclusions 
\begin{align*}
\scrS
& \supseteq 
\{
2^{n-1}+1, 2^{n-1}+2, 2^{n-1}+3,\cdots, 
2^{n-1} + 2^{n-3}
\} - (1 + 2^{n+1}) - 2^n\\
& = 
\{
2^n + 2^{n-1}+1, 2^n + 2^{n-1}+2, 2^n + 2^{n-1}+3,\cdots, 
2^n + 2^{n-1} + 2^{n-3}
\} - (1 + 2^{n+1}) - 2^n - 2^n \\
& = 
\{
2^n + 2^{n-1}+1, 2^n + 2^{n-1}+2, 2^n + 2^{n-1}+3,\cdots, 
2^n + 2^{n-1} + 2^{n-3}
\} - (1 + 2^{n+2}).
\end{align*}
hold, which establishes Equation \eqref{Eqn:LeftQuarDeRightQuar}.

Note that $S + (\calT \setminus \{1\})$ does not contain $-1$. It follows that $a = 0$. So $1\in \scrT$ and $-2\in \scrS$. The integer $-5 - (1 + 2^5)$ does not lie in $S + (\calT \setminus\{2\})$. Hence $\scrT$ contains $2$. Note that $(S \setminus \{-4\}) + \calT$ does not contain $-3$. It follows that $-4\in \scrS$. Consequently, $\calT$ is a minimal complement of $S$. 
\end{proof}

By Theorem \ref{Thm:RequiredClusters}, it follows that $\calT$ is a minimal complement of $S$. However, it turns out that $S$ is not a minimal complement of $\calT$. In fact, 
$$S \setminus\{2^{2n} + 2^{2n-1} + 2^{2n-2} + \cdots + 2^4 + 2^3 + \frac{2^2}{2} - (1 + 2^{2n+2})\,|\, n\geq 3\}$$
is also an additive complement of $\calT$. In the following result, we prove that $S$ contains a subset $\calS$ such that $\calS$ is a minimal complement of $\calT$. 

\begin{theorem}
\label{Thm:CoMinS}
Let $S = \{ -x_1, - x_2, -x_3, \cdots\}$ where $x_1 < x_2 < x_3< \cdots$. Define $S_1 = S$ and for each positive integer $i\geq 1$, define 
$$S_{i+1} 
: = 
\begin{cases}
S_i \setminus \{-x_i\} & \text{ if $S_i \setminus \{-x_i\}$ is a complement to $\calT$,}\\
S_i & \text{ otherwise.}
\end{cases}
$$
Let $\calS$ denote the subset $\cap _{i \geq 1} S_i$ of $\bbZ$. The subsets $\calS$ and $\calT$ of $\bbZ$ form a co-minimal pair.
\end{theorem}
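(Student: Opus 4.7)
The plan is to verify each of the three defining properties of a co-minimal pair in turn: first $\calS + \calT = \bbZ$, then minimality of $\calS$ with respect to $\calT$, and finally minimality of $\calT$ with respect to $\calS$. A preliminary induction on $i$ shows that each $S_i$ is a complement of $\calT$: the base case $S_1 = S$ is the preceding proposition, and the inductive step is built into the defining recursion, since $S_{i+1}$ is declared to be either $S_i$ or a subset which is already verified to be a complement.

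For the coverage statement $\calS + \calT = \bbZ$, I would fix $m \in \bbZ$ and work with $C_m := \{s \in S : m - s \in \calT\}$; the preliminary induction gives $C_m \cap S_i \neq \emptyset$ for every $i$. When $C_m$ is finite, the decreasing sequence of nonempty finite sets $C_m \cap S_i$ must stabilize, so at least one element of $C_m$ lies in every $S_i$, hence in $\calS$. The subtler case is $|C_m| = \infty$, which does occur: for instance, using $\max K_k = 2^k - 1$, one sees directly that $-2 - 2^k \in I_k$ for every $k \geq 1$, so $C_{-2}$ is infinite. In this case I would exhibit a specific $s \in C_m$ that is \emph{forced} in $S$, in the sense that $S \setminus \{s\}$ fails to cover $\bbZ$; such an $s$ lies in every complement of $\calT$ contained in $S$, and in particular in every $S_i$, hence in $\calS$. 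Concretely, for $s = m - 2^k$ lying in the ``upper'' cluster $(2^{k-1} + 2^{k-2} + J_{k-2}) - (1 + 2^{k+1})$ of $I_k$, I would show that $s + 1$ admits no representation in $S + \calT$ other than $s + 1$ itself, by ruling out each candidate alternative $s + 1 - 2^l$ (with $l \geq 1$) via the same gap analysis of $K_n$ and $J_n$ that drives Theorem~\ref{Thm:RequiredClusters}.

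The remaining two minimality statements are comparatively short. If $-x_i \in \calS$, then by construction $S_i \setminus \{-x_i\}$ is not a complement; the integer witnessing this failure also witnesses that $\calS \setminus \{-x_i\}$ is not a complement, since $\calS \setminus \{-x_i\} \subseteq S_i \setminus \{-x_i\}$, giving minimality of $\calS$. For minimality of $\calT$ I would apply Theorem~\ref{Thm:RequiredClusters} to the hypothetical complementary pair $(\calS, \calT \setminus \{t\})$ with $\scrS = \calS$, $\scrT = \calT \setminus \{t\}$ for an arbitrary $t \in \calT$: the theorem's conclusion would force $\calT \subseteq \calT \setminus \{t\}$, which is impossible. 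The main obstacle is the infinite-$C_m$ subcase of the coverage step, where the finiteness-based compactness argument fails and one must reuse the detailed gap description of $K_n$ and $J_n$ from the preceding results to produce a forced witness inside $C_m$.
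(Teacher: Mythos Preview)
Your treatment of the finite-$C_m$ case and of both minimality statements is correct and essentially matches the paper's argument. The gap lies in the infinite-$C_m$ subcase of the coverage step. Your claim that for $s = m - 2^k$ lying in the upper cluster $(2^{k-1}+2^{k-2}+J_{k-2})-(1+2^{k+1})$ of $I_k$ the integer $s+1$ admits only the representation $(s,1)$ in $S\times\calT$ is false in general. Take $k=7$ and $s=-135=122-(1+2^8)$: since $122=96+26\in 96+J_5$, this $s$ does lie in the upper cluster of $I_7$, yet $s+1=-134$ is also $-136+2$ with $-136=121-(1+2^8)\in I_7$ (because $121=96+25\in 96+J_5$). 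Hence $s=-135$ is \emph{not} forced; this is consistent with the remark immediately preceding the theorem, which explicitly lists $-135$ among the removable upper-cluster elements of $S$. So the mechanism you propose for producing a forced witness inside an infinite $C_m$ does not work as stated.

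The paper sidesteps the infinite-$C_m$ difficulty rather than meeting it head-on. It first observes that the forced elements supplied by Theorem~\ref{Thm:RequiredClusters} (the second quarter of the left half of each $\calI_n$, the left quarter of the right quarter of each $\calI_n$ for $n\geq 4$, and the points $-2,-4$) lie in every $S_i$ and hence in $\calS$, and then checks directly that these forced elements already yield $\bbZ_{\geq 0}$, $\calI_0$, $\calI_1$, and the left half of every $\calI_n$ inside $\calS+\calT$. Only the right halves of $\calI_n$ for $n\geq 2$ remain, and for those Equation~\eqref{Eqn:RightHalfNotObtained} forces every representation to use an element of $I_n\cup I_{n+1}$, so $C_m$ is automatically finite there and your own compactness argument finishes the job. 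In short, the values of $m$ with infinite $C_m$ are precisely those already covered by forced elements, so one never needs an ad hoc argument for that case.
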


\begin{proof}
By Theorem \ref{Thm:RequiredClusters}, for $i\geq 1$, $S_i$ contains the points in the second quarter of the left half of $\calI_n$, i.e.,
\begin{equation}
S_i \supseteq \{2^{n-3} + 1, 2^{n-3} + 2, 2^{n-3} + 3, \cdots, 2^{n-2}\} - (1 + 2^{n+1})
\quad 
\text{ for }
n\geq 3,
\end{equation}
thus $\calS$ contains these points and hence $\bbZ_{\geq 1}$ is contained in $\calS + \calT$. 
By Theorem \ref{Thm:RequiredClusters}, for any $i\geq 1$, $S_i$ contains $-2, -4$. So $\{-3, -2, -1, 0\} \in \calS + \calT$. 
By Theorem \ref{Thm:RequiredClusters}, for any $i\geq 1$, $S_i$ contains the points in the left quarter of the right quarter of $\calI_{n}$, i.e.,  
\begin{equation}
S_i \supseteq 
\{2^{n-1}+ 2^{n-2} + 1, 2^{n-1}+ 2^{n-2} + 2, 2^{n-1}+ 2^{n-2} + 3, \cdots, 2^{n-1}+ 2^{n-2} + 2^{n-4}\}  - (1 + 2^{n+1})
\quad 
\text{ for }
n\geq 4,
\end{equation}
thus $\calS$ contains these points and hence the left half of $\calI_n$ is contained in $\calS + \calT$ for any $n\geq 1$. It remains to show that the second half of $\calI_n$ is also contained in $\calS + \calT$ for any $n\geq 2$. 

For any element $y\in \bbZ_{\leq -1}$ that lie in the right half of some $\calI_n$ for some $n\geq 2$ and for any $i\geq 1$, there exist elements $s_{y, i}\in S_i , t_{y, i}\in \calT$ such that $y = s_{y, i} + t_{y, i}$. 
By Equation \eqref{Eqn:RightHalfNotObtained}, it follows that for some element $s_y \in S$, the equality $s_y = s_{y, i}$ holds for infinitely many $i$ and for such integers $i$, we have $t_y = t_{y, i}$ where $t_y: = y - s_y\in \calT$. Thus $y - t_y = s_{y, i}$ holds for infinitely many $i$. Hence, for each integer $i\geq 1$, there exists an integer $m_i \geq i$ such that $y - t_y = s_{y, m_i}$, which yields $y\in t_y  + S_{m_i} \subseteq t_y + S_i$. Thus $y$ lies in $t_y +  \calS$. This proves that the second half of $\calI_n$ is also contained in $\calS + \calT$ for any $n\geq 2$. 
Hence $\calS$ is an additive complement of $\calT$. It follows that $\calS$ is a minimal complement to $\calT$. 

By Theorem \ref{Thm:RequiredClusters}, it follows that $\calT$ is a minimal complement to $\calS$. This proves that $(\calS, \calT)$ is a co-minimal pair. 
\end{proof}

\section{A co-minimal pair involving an infinite symmetric subset}

In this section, we establish Theorem \ref{Thm:CoMinUV}, which follows from Theorem \ref{Thm:CoMinU}. 
Consider the subsets $\{\calI_n\}_{n\geq 0}, \{U_n\}_{n\geq 0}$ of $\bbZ$ defined by 
\begin{align*}
\calI_n 
& = \begin{cases}
\{-2, -1\} & \text{ if } n = 0, \\
\{1, 2, 3, \cdots, 2^n\} - (1+2^{n+1}) & \text{ for } n \geq 1,
\end{cases} 
\\
U_n 
& = 
\begin{cases}
\{-2, -1\} & \text{ if } n = 0,\\
\emptyset & \text{ if } n = 1, 2, \\
\{6\} - ( 1+ 2^4) & \text{ if } n = 3,\\
\{3, 4, 14\} - ( 1+ 2^5) & \text{ if } n = 4,\\
((2^{n-3} + \{1,2, 3,  \cdots, 2^{n-3}\})
\cup 
(2^{n-1}+2^{n-2}+\{1,2, 3, \cdots, 2^{n-4}\}))-(1 + 2^{n+1}) & \text{ if } n\geq 5.
\end{cases} 
\end{align*}
Denote the union $\cup _{n\geq 0} U_n$ by $U$. 

\begin{proposition}
\label{Prop:VAddComp}
The set $\calV$ is an additive complement of $U$ in $\bbZ$. 
\end{proposition}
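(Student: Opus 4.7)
The plan is to show $U + \calV = \bbZ$ by partitioning $\bbZ$ into the nonnegative integers and the $\calI_n$'s, and covering each piece. For the nonnegative part, the driving identity is that for $n \geq 5$, the ``low cluster'' $(2^{n-3} + \{1, \ldots, 2^{n-3}\}) - (1+2^{n+1})$ of $U_n$ shifted by $2^{n+1} \in \calV$ equals $\{2^{n-3}, 2^{n-3}+1, \ldots, 2^{n-2}-1\}$; taking the union over $n \geq 5$ covers $\bbZ_{\geq 4}$. The remaining values are handled via $0 = -1 + 1$, $1 = -1 + 2$ (using $-1 \in U_0$), $2 = -30 + 2^5$, and $3 = -29 + 2^5$ (using $\{-30, -29\} \subseteq U_4$).

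For the negative integers, I would use that $\bbZ_{\leq -1}$ is the disjoint union of the $\calI_n$ and cover each $\calI_n$ in turn. The cases $n = 0, 1, 2, 3, 4$ are handled by a direct case analysis; here the elements $-11 \in U_3$ and $-19 \in U_4$ are crucial (for example, $-11 = -19 + 2^3$ and $-19 = -11 - 2^3$). For $n \geq 5$, I would partition $\calI_n$ into $8$ consecutive blocks of size $2^{n-3}$ (the eighths), and, letting $A$ and $B$ denote the low and high clusters of $U_n$ respectively, cover the blocks as follows: the first, third, fourth, and sixth eighths come from $A + v$ for $v = -2^{n-3}, 2^{n-3}, 2^{n-2}, 2^{n-1}$; the fifth comes from the high cluster of $U_{n+1}$ shifted by $2^n$; the first and second halves of the seventh come from $B$ itself and from $B + 2^{n-4}$; the first and second halves of the eighth come from $B + 2^{n-3}$ and from the low cluster of $U_{n-1}$ shifted by $-2^{n-3}$ (where, for $n = 5$, the subset $\{-30, -29\} \subseteq U_4$ plays the role of the low cluster of $U_4$).

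The second eighth of $\calI_n$ coincides with $A$ itself, which cannot be obtained as itself shifted by any $v \in \calV$ since $0 \notin \calV$. To handle this I would exploit that $A$ consists of $2^{n-3}$ consecutive integers $a_1 < a_2 < \cdots < a_{2^{n-3}}$: set $a_j = a_{j-1} + 1$ (with $a_{j-1} \in U$ and $1 \in \calV$) for $j \geq 2$, and $a_1 = a_2 + (-1)$. The same trick covers the first half of the seventh eighth, which equals $B$, since $|B| = 2^{n-4} \geq 2$ for $n \geq 5$. The main obstacle I anticipate is the bookkeeping in this block-by-block verification: each shift must be a power of $2$ (positive or negative) and its image must land in the intended block of $\calI_n$. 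The most delicate sub-steps will be (i) covering $A$ and $B$ via the $\pm 1$ trick---which relies essentially on their consecutive-integer structure---and (ii) the boundary case $n = 5$, where the low cluster of $U_4$ must be identified inside the explicit set $U_4 = \{-30, -29, -19\}$ rather than read off from the uniform formula that defines $U_n$ for $n \geq 5$.
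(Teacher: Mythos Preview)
Your proposal is correct and follows the same overall strategy as the paper: cover $\bbZ_{\geq 0}$ using the low clusters shifted by $2^{n+1}$ (plus a few base cases), then cover each $\calI_n$ for $n\geq 5$ block by block after disposing of $n\le 4$ directly. The two proofs diverge only in which shifts are chosen for certain blocks.

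For the left half of $\calI_n$ (your first four eighths), the paper covers all of $\{1,\ldots,2^{n-1}\}-(1+2^{n+1})$ in one stroke using the high cluster of $U_{n+3}$ shifted by $2^{n+3}$, whereas you cover eighths one, three, four, and six via shifts of $A$ and handle the second eighth $A$ itself with the $\pm 1$ trick. For the first half of the seventh eighth (which equals $B$), the paper uses $B\pm 2^{n-5}$ (splitting $B$ in half and shifting each half), while you again use the $\pm 1$ trick on consecutive elements of $B$; note that for $n=5$ these coincide. For the remaining blocks (fifth eighth via the high cluster of $U_{n+1}+2^n$, sixth eighth via $A+2^{n-1}$, second half of the seventh via $B+2^{n-4}$, eighth eighth via $B+2^{n-3}$ and the low cluster of $U_{n-1}-2^{n-3}$) the two proofs agree verbatim. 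The paper's route is marginally cleaner in that it never needs the $\pm 1$ consecutive-integer trick and handles the whole left half with a single translate; your route stays closer to $U_n$ itself (rather than reaching out to $U_{n+3}$) and makes the block structure of $\calI_n$ more explicit. Both are complete.
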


\begin{proof}
Since the inclusion
$$U_0 + \{2\} 
\supseteq  
\{0, 1\}$$
holds
and the inclusions 
\begin{align*}
\cup_{k = 4}^m (U_k + 2^{k+1})
& \supseteq 
\cup_{k = 4}^m (2^{k-3} + \{0, 1,2,  \cdots, 2^{k-3}-1\})\\
& \supseteq 
\{2, 3, 4, \cdots, 2^{m-2}-1\}
\end{align*}
hold for any $m\geq 4$, it follows that $U + \calV$ contains $\bbZ_{\geq 0}$. 

Note that 
$$U_n \subseteq \calI_n\quad \text{ for all }n\geq 0,$$
and 
$$\bigcup_{n\geq 0} \calI_n = \bbZ_{\leq -1},$$ 
and the sets $\calI_n$ lie next to each other in the sense that 
$$\min \calI_n = 1 + \max \calI_{n+1} \quad \text{ for all }n\geq 0.$$
Thus, to prove that $U+\calV$ is equal to $\bbZ$, it remains to show that $U + \calV$ contains $\calI_n$ for all $n\geq 0$. 

The left half of $\calI_n$ is contained in $U_{n+3} + 2^{n+3}$ for any $n\geq 1$, i.e., the inclusions
\begin{align*}
U_{n+3} + 2^{n+3} 
& \supseteq (2^{n+2} + 2^{n+1} + \{1, 2, 3, \cdots, 2^{n-1}\}) - ( 1 + 2^{n+4}) + 2^{n+3}\\
& = (\{1, 2, 3, \cdots, 2^{n-1}\} - (1 + 2^{n+1})) + (1 + 2^{n+1}) + 2^{n+2} + 2^{n+1}- ( 1 + 2^{n+4}) + 2^{n+3}\\
& = \{1, 2, 3, \cdots, 2^{n-1}\}  - (1 + 2^{n+1})
\end{align*}
hold for any $n\geq 1$. The third quarter of $\calI_n$ is contained in the set $(U_{n+1} + 2^n )\cup (U_n + 2^{n-1})$ for $n\geq 4$, i.e., the inclusions
\begin{align*}
 (U_{n+1} + 2^n )\cup (U_n + 2^{n-1})
& \supseteq 
\left((2^n + 2^{n-1} + \{1, 2, 3, \cdots, 2^{n-3}\} ) - (1 + 2^{n+2})+ 2^n
\right) \\
& \qquad 
\cup 
\left(
(2^{n-3} + \{1, 2, 3, \cdots, 2^{n-3}\}) - (1 + 2^{n+1})
+ 2^{n-1}
\right)\\
& \supseteq 
((2^{n-1} + \{1, 2, 3, \cdots, 2^{n-3}\} ) - (1 + 2^{n+1})) \\
& \qquad 
\cup 
(
(2^{n-1}+2^{n-3} + \{1, 2, 3, \cdots, 2^{n-3}\}) - (1 + 2^{n+1}))\\
& \supseteq 
(2^{n-1} + \{1, 2, 3, \cdots, 2^{n-2}\} ) - (1 + 2^{n+1})
\end{align*}
hold for $n\geq 4$. The left half of the right quarter of $\calI_4$ is contained in the set 
$(U_3 + (-2^3))\cup (U_4 + (-1))$, 
and the left half of the right quarter of $\calI_n$ is contained in the set 
$(U_n + (-2^{n-5}))
\cup 
(U_n + 2^{n-5})
\cup 
(U_n + 2^{n-4})$ for $n\geq 5$, i.e., the inclusions
\begin{align*}
& (U_n + (-2^{n-5}))
\cup 
(U_n + 2^{n-5})
\cup 
(U_n + 2^{n-4})\\
& \supseteq 
((2^{n-1} + 2^{n-2} + \{1, 2, 3, \cdots, 2^{n-4}\}) - (1 + 2^{n+1})) 
+ \{-2^{n-5}, 2^{n-5}, 2^{n-4} \} \\
& \supseteq 
\left(
((2^{n-1} + 2^{n-2} + \{2^{n-5}+1,2^{n-5}+ 2,2^{n-5}+ 3, \cdots, 2^{n-4}\}) - (1 + 2^{n+1})) 
-2^{n-5} 
\right)\\
& \qquad \cup
\left(
((2^{n-1} + 2^{n-2} + \{1, 2, 3, \cdots, 2^{n-5}\}) - (1 + 2^{n+1})) 
+ 2^{n-5}
\right)\\
& \qquad \cup
\left(
((2^{n-1} + 2^{n-2} + \{1, 2, 3, \cdots, 2^{n-4}\}) - (1 + 2^{n+1})) 
+ 2^{n-4} 
\right)\\
& \supseteq 
((2^{n-1} + 2^{n-2} + \{1, 2, 3, \cdots, 2^{n-4}\}) - (1 + 2^{n+1})) \\
& \qquad 
\cup 
((2^{n-1} + 2^{n-2} + 2^{n-4} + \{1, 2, 3, \cdots, 2^{n-4}\}) - (1 + 2^{n+1})) \\
& \supseteq 
(2^{n-1} + 2^{n-2} + \{1, 2, 3, \cdots, 2^{n-3}\}) - (1 + 2^{n+1}),
\end{align*}
hold for $n\geq 5$. The right half of the right quarter of $\calI_4$ is contained in the set $U_0 + (-2^4)$, and the right half of the right quarter of $\calI_n$ is contained in the set $(U_n + 2^{n-3}) \cup (U_{n-1} - 2^{n-3} )$ for $n\geq 5$, i.e., the inclusions
\begin{align*}
& (U_n + 2^{n-3}) \cup (U_{n-1} - 2^{n-3} ) \\
& \supseteq 
\left(
((2^{n-1} + 2^{n-2} + \{1, 2, 3, \cdots, 2^{n-4}\}) - (1 + 2^{n+1})) 
+ 2^{n-3}
\right) \\
& \qquad 
\cup 
\left( 
((2^{n-4} + \{1, 2, 3, \cdots, 2^{n-4}\}) - (1 + 2^n) ) - 2^{n-3} 
\right) \\
& = 
\left((
(2^{n-1} + 2^{n-2} + 2^{n-3} + \{1, 2, 3, \cdots, 2^{n-4}\}) - (1 + 2^{n+1})
\right) \\
& \qquad 
\cup 
\left(
((2^{n-1} + 2^{n-2} + 2^{n-3} + 2^{n-4} + \{1, 2, 3, \cdots, 2^{n-4}\}) - (1 + 2^n) ) - 2^{n-3} - 2^{n-1} - 2^{n-2} - 2^{n-3} 
\right)\\
& = 
\left((
(2^{n-1} + 2^{n-2} + 2^{n-3} + \{1, 2, 3, \cdots, 2^{n-4}\}) - (1 + 2^{n+1})
\right) \\
& \qquad 
\cup 
\left(
(2^{n-1} + 2^{n-2} + 2^{n-3} + 2^{n-4} + \{1, 2, 3, \cdots, 2^{n-4}\}) - (1 + 2^{n+1})
\right)\\
& = 
(2^{n-1} + 2^{n-2} + 2^{n-3} + \{1, 2, 3, \cdots, 2^{n-3}\}) - (1 + 2^{n+1})
\end{align*}
hold for $n\geq 5$. This proves that $\calI_n$ is contained in $U + \calV$ for $n\geq 4$, and hence $U + \calV$ contains $\bbZ_{\leq 2^4 - (1 + 2^5)} = \bbZ_{\leq - 1 - 2^4}$. Also note that the left half of $\calI_1, \calI_2, \calI_3$ is contained in $U + \calV$. From the inclusions
\begin{align*}
& (U_4 \cup U_3 \cup U_0) 
+ 
\{-4, -2, -1, 1, 2, 4, 8\}\\
& \supseteq 
\left(
(U_4 + \{8\}) 
\cup 
(U_3 + \{-1, 1, 2\} )
\right)
\cup 
(U_0 + \{-4\})
\cup 
(U_0 + \{-2\})
\cup 
(U_0 + \{-1, 1\})
\\
& \supseteq 
(\{5, 6, 7, 8\} - ( 1 + 2^4))
\cup 
(\{3, 4\} - (1+ 2^3))
\cup 
(\{1, 2\} - ( 1+ 2^2))
\cup 
\{-2, -1\}, 
\end{align*}
it follows that the subsets $\calI_0, \calI_1$ and the right half of $\calI_2, \calI_3$ is contained in $U + \calV$. Hence $\calV$ is an additive complement of $U$. 
\end{proof}

\begin{theorem}
\label{Thm:RequiredClustersV}
The set $\calV$ is a minimal complement of $U$ in $\bbZ$. 
\end{theorem}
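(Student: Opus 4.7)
The plan is to show that for each element $v \in \calV$, the set $\calV \setminus \{v\}$ fails to be a complement of $U$, by exhibiting a witness integer $z_v \in \bbZ$ such that $z_v - v \in U$ but $z_v - v' \notin U$ for every $v' \in \calV \setminus \{v\}$. Combined with Proposition \ref{Prop:VAddComp}, this will establish the minimality of $\calV$.

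The natural source of witnesses is the coverage analysis carried out in the proof of Proposition \ref{Prop:VAddComp}, where each negative integer was expressed as a sum $u + v$ with specified $u \in U_m$ and $v \in \calV$. I would split the argument into three cases. In the first case, $v = 2^k$ with $k \geq 4$; setting $n = k - 3 \geq 1$, the rightmost point of the left half of $\calI_n$, namely $z_v := 2^{n-1} - (1 + 2^{n+1})$, satisfies $z_v - 2^k \in U_k$ by Proposition \ref{Prop:VAddComp}. One then verifies that for every other $v' = \pm 2^{k'}$, the element $z_v - v'$ fails to lie in $U$: it either falls outside $\bigcup_m \calI_m$ (being too close to $0$ or pushed too far to the left), or lies in some $\calI_m$ but at a position between the two clusters of $U_m$. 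In the second case, $v = -2^k$ with $k \geq 2$; an analogous witness is chosen from the right half of the right quarter of $\calI_{k+3}$ using the coverage by $U_{k+2} - 2^k$ from Proposition \ref{Prop:VAddComp}, and a parallel verification shows no other $v'$ suffices. In the third case, the small remaining powers $v \in \{\pm 1, \pm 2, \pm 4, \pm 8\}$ are handled by direct case analysis using the explicit sets $U_0 = \{-2, -1\}$, $U_1 = U_2 = \emptyset$, $U_3 = \{-11\}$, $U_4 = \{-30, -29, -19\}$; for instance, $z = -1$ is covered only as $-2 + 1$ (forcing $1 \in \calV$) and $z = 1$ is covered only as $-1 + 2$ (forcing $2 \in \calV$), and symmetric witnesses handle the negative small powers.

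The main obstacle will be the case analysis in the first two cases, showing that $z_v - v' \notin U$ for every $v' \neq v$. This reduces to a quantitative analysis of where $z_v - v'$ lies relative to the two clusters that constitute each $U_m$ for $m \geq 5$ (one cluster near $2^{m-3}$ and another near $2^{m-1} + 2^{m-2}$, after the shift by $-(1 + 2^{m+1})$). The required estimates are exactly parallel to the gap analysis underlying Theorem \ref{Thm:RequiredClusters} and follow from dyadic comparisons: shifting $z_v$ by a power of $2$ much smaller than $|v|$ keeps $z_v - v'$ in a nearby $\calI_m$ but lands it in one of the gaps of $U_m$, while shifting by a power of $2$ much larger than $|v|$ pushes $z_v - v'$ outside $\bigcup_m \calI_m$ entirely. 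Once these dyadic bounds are tabulated, each sub-case is a short numerical verification.
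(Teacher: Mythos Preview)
Your overall strategy---exhibit, for each $v\in\calV$, a witness $z_v$ with a unique representation $z_v=u+v$, $u\in U$---is exactly the paper's approach. The difficulty is in the choice of witnesses, and your Case~1 choice is wrong.

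For $v=2^k$ with $k\geq 4$ you take $n=k-3$ and $z_v=2^{n-1}-(1+2^{n+1})$, the rightmost element of the left half of $\calI_n$. But these elements are \emph{not} uniquely represented. For $k=5$ one has $z_v=-7=(-39)+32=(-11)+4$ with $-39\in U_5$ and $-11\in U_3$; for $k=6$, $z_v=-13=(-77)+64=(-29)+16$ with $-29\in U_4$; for $k=7$, $z_v=-25=(-29)+4$ again. In general, for $n\geq 5$ one has $z_v-2^{n-2}=2^{n-2}-(1+2^{n+1})$, which is exactly the largest element of the first cluster of $U_n$, so $z_v$ is always also hit by $v'=2^{k-5}$. (For $k=4$ there is an additional problem: $z_v-2^4=-20\notin U$ at all, since $U_4=\{-30,-29,-19\}$ does not follow the generic pattern.) Your heuristic that ``shifting $z_v$ by a smaller power of $2$ lands in a gap of $U_m$'' is therefore false for the left half of $\calI_n$.

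The paper's remedy is to go the other direction: for $v=2^k$ with $k\geq 2$ it sets $n=k+3$ and takes the witness $2^{n-1}+2^{n-2}+2^{n-3}+2^{n-4}-(1+2^{n+1})$, the largest element of the third quarter of the right quarter of $\calI_n$. The point is that the \emph{right half} of $\calI_n$ enjoys a structural property the left half lacks: it is disjoint from $\bigcup_{m\geq n+2}U_m+\calV$ and from $U_{n+1}+(\calV\setminus\{2^n\})$ (the paper's Claims~1 and~2). This confines the possible representations to $U_n$, $U_{n+1}$, and $\bigcup_{m<n}U_m$, after which the dyadic bookkeeping you describe actually works. Your Case~2 plan (witnesses in the right half of the right quarter of $\calI_{k+3}$) is compatible with the paper's choice $2^n-2-(1+2^{n+1})$ and is on the right track, but you will need to specify the element carefully and invoke the same right-half disjointness.
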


\begin{proof}
By Proposition \ref{Prop:VAddComp}, $\calV$ is an additive complement of $U$. To prove that $\calV$ is a minimal complement of $U$ in $\bbZ$, it suffices to prove that if $\scrV$ is nonempty subset of $\calV$ satisfying $U + \scrV = \bbZ$, then $\scrV = \calV$, which follows from the nine claims below. Indeed, the third claim below implies that $2^n\in \scrV$ for all $n\geq 2$, the fourth (resp. fifth, sixth, seventh, eighth) claim implies that $\scrV$ contains $2$ (resp. $1, -1, -2, -4$), and the ninth claim implies that $\scrV$ contains $-2^n$ for all $n\geq 3$. Thus, from the following claims, it follows that $\scrV = \calV$ and hence $\calV$ is a minimal complement to $U$. 

We claim the following.  

\begin{enumerate}[\textbf{\text{Claim }} 1\textbf{.}]
\item 
For any $n\geq 2$, no point in the right half of $\calI_n$ lie in $\cup _{m \geq n+2}U_m + \calV$, i.e., 
\begin{equation}
\label{Eqn:RightHalfNotObtainedUV}
\left(
\{2^{n-1}+1, 2^{n-1}+2, 2^{n-1}+3, \cdots, 2^n\} - (1 + 2^{n+1})
\right) 
\cap 
\left(
\cup _{m \geq n+2}U_m + \calV
\right)
= \emptyset
\quad
\text{ for } 
n\geq 2.
\end{equation}

\item 
For any $n\geq 3$, no point in the right half of $\calI_n$ lie in $U_{n+1} + (\calV \setminus\{2^n\})$, i.e., 
\begin{equation}
\label{Eqn:RightHalfNotObtainedLHS1ptUVbis}
\left(
\{2^{n-1}+1, 2^{n-1}+2, 2^{n-1}+3, \cdots, 2^n\} - (1 + 2^{n+1})
\right) 
\cap 
\left(
U_{n+1} + (\calV \setminus \{ 2^n\} )
\right)
= \emptyset
\quad
\text{ for } 
n\geq 3.
\end{equation}

\item 
For any $n\geq 5$, the largest element of the third quarter of the right quarter of $\calI_n$ does not lie in $U + (\calV \setminus \{2^{n-3} \})$, more precisely, 
\begin{equation}
\label{Eqn:VContainsPositifPowersOf2}
2^{n-1} + 2^{n-2} + 2^{n-3} + 2^{n-4} - (1 + 2^{n+1}) \notin (U_n + (\calV \setminus\{2^{n-3}\})) \cup (\cup_{m \geq 0, m \neq n} U_m + \calV)
\quad 
\text{ for } n\geq 5.
\end{equation}

\item 
\begin{equation}
\label{Eqn:VContains2}
1 \notin ((U \setminus \{-1\}) + \calV) \cup (-1 + (\calV \setminus \{2\})) .
\end{equation}

\item 
\begin{equation}
\label{Eqn:VContains1}
26 - (1 + 2^6) \notin ((U \setminus \{25 - (1+2^6)\}) + \calV) \cup (25 - (1 + 2^6) + (\calV \setminus \{1\})).
\end{equation}

\item 
\begin{equation}
\label{Eqn:VContainsMinus1}
25 - (1 + 2^6) \notin ((U \setminus \{26 -(1+2^6)\}) + \calV) \cup (26 - (1 + 2^6) + (\calV \setminus \{-1\})).
\end{equation}

\item 
\begin{equation}
\label{Eqn:VContainsMinus2}
-4 \notin ((U \setminus \{-2\}) + \calV) \cup (-2 + (\calV \setminus \{-2\}))  .
\end{equation}

\item 
\begin{equation}
\label{Eqn:VContainsMinus4}
-6 \notin ((U \setminus \{-2\}) + \calV) \cup (-2 + (\calV \setminus \{-4\}))  .
\end{equation}

\item 
For any $n\geq 6$, the third largest element of $\calI_n$, i.e., the integer $2^n - 2 - (1 + 2^{n+1})$ does not belong to $U + (\calV\setminus \{-2^{n-3}\})$, i.e., 
\begin{equation}
\label{Eqn:VContainsNegativePowersOf2}
2^n - 1 - (1 + 2^{n+1}) \notin U + (\calV\setminus \{-2^{n-3}\}).
\end{equation}
\end{enumerate}

First, we establish Equation \eqref{Eqn:RightHalfNotObtainedUV}. 
For any $n\geq 2$, the inclusion $U_n \subseteq \calI_n = \{1, 2, 3, \cdots, 2^n\}- (1 + 2^{n+1})$ yields that 
$$U_n \cap \bbZ_{\geq 0} = \emptyset, U_n \cap \bbZ_{\leq - 2^{n+1} - 1} = \emptyset,$$
and for any $m\geq n+2$ with $n\geq 2$, the inclusions
\begin{align*}
U_m + v
& \subseteq 
\calI_m + v \\
& \subseteq 
(\{1, 2, 3, \cdots, 2^m\} - (1 + 2^{m+1})) + v \\
& = 
\{1, 2, 3, \cdots, 2^m\}  + v-  2^{m+1} - 1\\
& \subseteq \bbZ_{\geq 0}
\end{align*}
hold for any $v\in \calV$ with $v \geq 2^{m+1}$, and the inclusions
\begin{align*}
U_m + v
& \subseteq 
\bbZ_{\leq 2^m - (1 + 2^{m+1})} + v \\
& = \bbZ_{\leq 2^m - (1 + 2^{m+1}) + v } \\
& = \bbZ_{\leq -1-2^m + v} \\
& \subseteq 
\bbZ_{\leq  - 1 - 2^m + 2^{m-1}} \\
& \subseteq 
\bbZ_{\leq  - 1 - 2^{m-1}} \\
&\subseteq
\bbZ_{\leq - (1+2^{n+1})}
\end{align*}
hold for any $v\in \calV$ with $v \leq 2^{m-1}$, and hence 
\begin{equation}
\label{Eqn:RightHalfNotObtainedLHSSans1ptUV}
\left(
\{2^{n-1}+1, 2^{n-1}+2, 2^{n-1}+3, \cdots, 2^n\} - (1 + 2^{n+1})
\right) 
\cap 
\left(
\cup_{m \geq n+2} (U_m + (\calV \setminus \{2^m\}))
\right)
= \emptyset
\quad
\text{ for } 
n\geq 2.
\end{equation}
For any $m\geq 4$, the inclusions
\begin{align*}
U_m + 2^m 
& \subseteq 
((2^{m-3} + \{1,2, 3,  \cdots, 2^{m-3}\})
\cup 
(2^{m-1}+2^{m-2}+\{1,2, 3, \cdots, 2^{m-4}\}))-(1 + 2^{m+1}) + 2^m\\
& = 
((2^{m-3} + \{1,2, 3,  \cdots, 2^{m-3}\})
\cup 
(2^{m-1}+2^{m-2}+\{1,2, 3, \cdots, 2^{m-4}\}))-(1 + 2^m)\\
& \subseteq
(\{1,2, 3,  \cdots, 2^{m-2}\} - (1 + 2^m))
\cup 
(2^{m-1}+2^{m-2}+\{1,2, 3, \cdots, 2^{m-4}\}-(1 + 2^m))\\
& =
(\{1,2, 3,  \cdots, 2^{m-2}\} - (1 + 2^m))
\cup 
(\{1,2, 3, \cdots, 2^{m-4}\}-(1 + 2^{m-2}))
\end{align*}
hold, which implies that for $n\geq 3$ and $m\geq n+1$, the set $U_m + 2^m$ is contained in the union of the left half of $\calI_{m-1}$ and the left half of $\calI_{m-3}$, and hence 
\begin{equation}
\label{Eqn:RightHalfNotObtainedLHS1ptUV}
\left(
\{2^{n-1}+1, 2^{n-1}+2, 2^{n-1}+3, \cdots, 2^n\} - (1 + 2^{n+1})
\right) 
\cap 
\left(
\cup_{m \geq n+1} (U_m + 2^m)
\right)
= \emptyset
\quad
\text{ for } 
n\geq 3.
\end{equation}
From Equations \eqref{Eqn:RightHalfNotObtainedLHSSans1ptUV}, \eqref{Eqn:RightHalfNotObtainedLHS1ptUV}, Equation \eqref{Eqn:RightHalfNotObtainedUV} follows. 

Now, we prove that Equation \eqref{Eqn:RightHalfNotObtainedLHS1ptUVbis} holds. 
For any $m\geq 4$, the inclusions
\begin{align*}
U_m + v 
& \subseteq 
\bbZ_{\geq 1 - (1 + 2^{m+1})} + v \\
& = 
\bbZ_{\geq 1 - (1 + 2^{m+1}) + v }\\
& \subseteq 
\bbZ_{\geq 1 - (1 + 2^{m+1}) + 2^{m+1} }\\
& = \bbZ_{\geq 0}
\end{align*}
hold for $v\in \calV$ with $v\geq 2^{m+1}$, and the inclusions
\begin{align*}
U_m + v 
& \subseteq 
\bbZ_{\leq 2^m - (1 + 2^{m+1})} + v \\
& = 
\bbZ_{\leq 2^m - (1 + 2^{m+1}) + v }\\
& \subseteq 
\bbZ_{\leq 2^m - (1 + 2^{m+1}) + 2^{m-2} }\\
& = \bbZ_{\leq 2^{m-2} -(1 + 2^{m})}
\end{align*}
hold for $v\in \calV$ with $v \leq  2^{m-2}$, and thus Equation \eqref{Eqn:RightHalfNotObtainedLHS1ptUV} yields Equation \eqref{Eqn:RightHalfNotObtainedLHS1ptUVbis}. 

Now, we show that Equation \eqref{Eqn:VContainsPositifPowersOf2} holds. Note that the inclusions
\begin{align*}
\calI_n \cap (\cup_{0 \leq m < n} U_m + v)
& \subseteq 
\calI_n \cap (\cup_{0 \leq m < n} \calI_m + v)\\
& \subseteq 
\calI_n \cap (\bbZ_{\leq -1} +v )\\
& \subseteq 
\bbZ_{\geq 1 - (1 + 2^{n+1})}\cap (\bbZ_{\leq -(1 + 2^{n+1})}) \\
& \subseteq \emptyset
\end{align*}
hold for any $n\geq 5$ and for any $v\in \calV$ with $v\leq - 2^{n+1}$. 
The inclusions
\begin{align*}
& (\{2^{n-1} + 2^{n-2} + 2^{n-3}+k\,|\,1 \leq k \leq 2^{n-4}\} - (1 + 2^{n+1}) ) 
\cap (\cup_{0 \leq m < n}U_m  - 2^n )\\
& \subseteq  
(\{2^{n-1} + 2^{n-2} + 2^{n-3}+k\,|\,1 \leq k \leq 2^{n-4}\} - (1 + 2^{n+1}) ) \\
& \qquad 
\cap (((U_{n-1} \cup U_{n-2} \cup U_{n-3} )\cup U_{n-4}\cup (\cup_{0 \leq m \leq n-5}U_m))  - 2^n )\\
& \subseteq  
(\{2^{n-1} + 2^{n-2} + 2^{n-3}+k\,|\,1 \leq k \leq 2^{n-4}\} - (1 + 2^{n+1}) ) \\
& \qquad 
\cap (((\calI_{n-1} \cup \calI_{n-2} \cup \calI_{n-3} )\cup U_{n-4} \cup(\cup_{0 \leq m \leq n-5}\calI_m))  - 2^n )\\
& \subseteq  
(\{2^{n-1} + 2^{n-2} + 2^{n-3}+k\,|\,1 \leq k \leq 2^{n-4}\} - (1 + 2^{n+1}) ) \\
& \qquad 
\cap (((\bbZ_{\leq 2^{n-4}-1 - (1 + 2^{n-3})})\cup  (\bbZ_{\geq 1 - ( 1+ 2^{n-4}) }))  - 2^n )\\
& \subseteq  
(\{2^{n-1} + 2^{n-2} + 2^{n-3}+k\,|\,1 \leq k \leq 2^{n-4}\} - (1 + 2^{n+1}) ) \\
& \qquad 
\cap ( (\bbZ_{\leq 2^{n-4}-1 - (1 + 2^{n-3})} - 2^n)\cup  ((\bbZ_{\geq 1 - ( 1+ 2^{n-4}) }) - 2^n))\\
& \subseteq  
(\{2^{n-1} + 2^{n-2} + 2^{n-3}+k\,|\,1 \leq k \leq 2^{n-4}\} - (1 + 2^{n+1}) ) \\
& \qquad 
\cap ( \bbZ_{\leq 2^{n-4} -1- (1 + 2^{n-3})- 2^n}\cup \bbZ_{\geq 1 - ( 1+ 2^{n-4})- 2^n })\\
& \subseteq  
(\{2^{n-1} + 2^{n-2} + 2^{n-3}+k\,|\,1 \leq k \leq 2^{n-4}\} - (1 + 2^{n+1}) ) \\
& \qquad 
\cap ( \bbZ_{\leq 2^{n-1} + 2^{n-2} + 2^{n-3} + 2^{n-4} - 1 - (1 + 2^{n+1})}\cup \bbZ_{\geq 2^{n-1} + 2^{n-2} + 2^{n-3} + 2^{n-4} +  1 - ( 1+ 2^{n+1})})\\
& \subseteq  
(\{2^{n-1} + 2^{n-2} + 2^{n-3}+k\,|\,1 \leq k \leq 2^{n-4}\} - (1 + 2^{n+1}) ) \\
& \qquad 
\cap  \bbZ_{\leq 2^{n-1} + 2^{n-2} + 2^{n-3} + 2^{n-4} - 1 - (1 + 2^{n+1})}\\
& \subseteq 
\bbZ_{\leq 2^{n-1} + 2^{n-2} + 2^{n-3} + 2^{n-4} - 1 - (1 + 2^{n+1})}
\end{align*}
hold for $n\geq 5$. 
The inclusions
\begin{align*}
& \calI_n \cap (\cup_{0 \leq m < n}U_m  - 2^{n-1} )\\
& \subseteq  
(U_{n-1} - 2^{n-1} )  
\cup 
(\calI_n \cap (\cup_{0 \leq m \leq  n-2}U_m  - 2^{n-1} ))\\
& \subseteq 
\left(
 ( ((2^{n-4} + \{1, 2, 3, \cdots , 2^{n-4} \}) \cup (2^{n-2} + 2^{n-3} + \{1, 2, 3, \cdots, 2^{n-5}\}))    - (1 + 2^n) )- 2^{n-1} 
 \right) \\
& \qquad  
\cup 
(\calI_n \cap (\cup_{0 \leq m \leq  n-2}\calI_m  - 2^{n-1} ))\\
& \subseteq 
\bigl(
 ((2^{n-4} + \{1, 2, 3, \cdots , 2^{n-4} \}) - ( 1+ 2^n + 2^{n-1})) \\
& \qquad \cup ((2^{n-2} + 2^{n-3} + \{1, 2, 3, \cdots, 2^{n-5}\}) - ( 1+ 2^n + 2^{n-1})))    
 \bigr) 
\cup 
(\calI_n \cap (\bbZ_{\geq 1 - (1 + 2^{n-1})} - 2^{n-1}))\\
& = 
\bigl(
 ((2^{n-1}  + 2^{n-4} + \{1, 2, 3, \cdots , 2^{n-4} \}) - ( 1+ 2^{n+1})) \\
& \qquad 
\cup ((2^{n-1} + 2^{n-2} + 2^{n-3} + \{1, 2, 3, \cdots, 2^{n-5}\}) - ( 1+ 2^{n+1})))    
\bigr)
\cup 
(\calI_n \cap (\bbZ_{\geq -2^n}))\\
& = 
\left(
\bbZ_{\leq 2^{n-1}  + 2^{n-4} + 2^{n-4 } - ( 1+ 2^{n+1})}
\cup 
\bbZ_{\leq 2^{n-1} + 2^{n-2} + 2^{n-3}+ 2^{n-5} - ( 1+ 2^{n+1})} 
\right)
\cup 
(\calI_n \cap (\bbZ_{\geq 2^n + 1 - ( 1 + 2^{n+1})}))\\
& = 
\bbZ_{\leq 2^{n-1}  + 2^{n-3} - ( 1+ 2^{n+1})}
\cup 
\bbZ_{\leq 2^{n-1} + 2^{n-2} + 2^{n-3}+ 2^{n-5} - ( 1+ 2^{n+1})}
\end{align*}
hold for any $n\geq 5$. 
The inclusions
\begin{align*}
& \calI_n \cap (\cup_{0 \leq m < n}U_m  - 2^{n-2} )\\
& \subseteq  
(U_{n-1} - 2^{n-2} )  
\cup 
(\calI_n \cap (\cup_{0 \leq m \leq  n-2}U_m  - 2^{n-2} ))\\
& \subseteq 
\left(
 ( ((2^{n-4} + \{1, 2, 3, \cdots , 2^{n-4} \}) \cup (2^{n-2} + 2^{n-3} + \{1, 2, 3, \cdots, 2^{n-5} + 1\}))    - (1 + 2^n) )- 2^{n-2} 
 \right) \\
& \qquad  
\cup 
(\calI_n \cap (\cup_{0 \leq m \leq  n-2}\calI_m  - 2^{n-2} ))\\
& \subseteq 
\bigl(
 ((2^{n-4} + \{1, 2, 3, \cdots , 2^{n-4} \}) - ( 1+ 2^n + 2^{n-2})) \\
& \qquad \cup ((2^{n-2} + 2^{n-3} + \{1, 2, 3, \cdots, 2^{n-5}+1\}) - ( 1+ 2^n + 2^{n-2})))    
 \bigr) 
\cup 
(\calI_n \cap (\bbZ_{\geq 1 - (1 + 2^{n-1})} - 2^{n-2}))\\
& = 
\bigl(
 ((2^{n-1} + 2^{n-2} + 2^{n-4} + \{1, 2, 3, \cdots , 2^{n-4} \}) - ( 1+ 2^{n+1})) \\
& \qquad 
\cup ((2^{n-3} + \{1, 2, 3, \cdots, 2^{n-5}+1\}) - ( 1+ 2^n)))    
\bigr)
\cup 
(\calI_n \cap (\bbZ_{\geq -2^{n-1} - 2^{n-2}}))\\
& = 
\bigl(
 ((2^{n-1} + 2^{n-2} + 2^{n-4} + \{1, 2, 3, \cdots , 2^{n-4} \}) - ( 1+ 2^{n+1})) \\
& \qquad 
\cup ((2^{n-1} + 2^{n-2} + 2^{n-3} + 2^{n-4} + \{1, 2, 3, \cdots, 2^{n-5}+1\}) - ( 1+ 2^{n+1})))    
\bigr) \\
& \qquad 
\cup 
(\calI_n \cap (\bbZ_{\geq 2^n +  (1 + 2^{n-2})    - (1 + 2^{n+1}) }))\\
& = 
\bigl(
 ((2^{n-1} + 2^{n-2} + 2^{n-4} + \{1, 2, 3, \cdots , 2^{n-4} \}) - ( 1+ 2^{n+1})) \\
& \qquad 
\cup ((2^{n-1} + 2^{n-2} + 2^{n-3} + 2^{n-4} + \{1, 2, 3, \cdots, 2^{n-5}+1\}) - ( 1+ 2^{n+1})))    
\bigr)
\end{align*}
hold for any $n\geq 5$. 
The inclusions 
\begin{align*}
\cup_{0 \leq m < n}U_m + v
& \subseteq 
\bbZ_{\geq 2^{n-4}+1 - (1+2^{n}) } + v \\
& \subseteq 
\bbZ_{\geq 2^{n-4}+1 - (1+2^{n}) - 2^{n-3}}\\
& \subseteq 
\bbZ_{\geq 2^{n-1} + 2^{n-2} + 2^{n-3} + 2^{n-4} + 1- ( 1+ 2^{n+1})}
\end{align*}
hold for any $n\geq 5$ and for any $v\in \calV$ with $v\geq - 2^{n-3}$.
These inclusions prove that for $n\geq 5$, the largest element of the third quarter of the right quarter of $\calI_n$ does not belong to $\cup_{0 \leq m < n} U_m + \calV$, i.e., 
$$2^{n-1} + 2^{n-2} + 2^{n-3} + 2^{n-4} - (1 + 2^{n+1}) \notin \cup_{0 \leq m < n} U_m + \calV.$$
Combining the above with Equations \eqref{Eqn:RightHalfNotObtainedUV}, \eqref{Eqn:RightHalfNotObtainedLHS1ptUVbis}, we obtain 
$$
2^{n-1} + 2^{n-2} + 2^{n-3} + 2^{n-4} - (1 + 2^{n+1}) \notin 
\left(\cup_{m \geq 0, m \neq n, n+1} U_m + \calV\right) 
\cup 
(U_{n+1} + (\calV\setminus\{2^n\}))
\quad 
\text{ for } n\geq 5.
$$
Since the inclusions
\begin{align*}
U_{n+1} + 2^n 
& \subseteq 
\bbZ_{\leq 2^n + 2^{n-1} + 2^{n-3} - (1 + 2^{n+2})} + 2^n\\
& \subseteq 
\bbZ_{\leq 2^n + 2^{n-1} + 2^{n-3} - (1 + 2^{n+2}) + 2^n}\\
& \subseteq 
\bbZ_{\leq 2^{n-1} + 2^{n-3} - (1 + 2^{n+1}) }\\
\end{align*}
hold for $n\geq 5$, it follows that 
$$
2^{n-1} + 2^{n-2} + 2^{n-3} + 2^{n-4} - (1 + 2^{n+1}) \notin 
\left(\cup_{m \geq 0, m \neq n} U_m + \calV\right) 
\quad 
\text{ for } n\geq 5.
$$
Note that the inclusions 
\begin{align*}
U_n + v
& \subseteq 
\bbZ_{\geq 1 - (1 + 2^{n+1})} + v \\
& \subseteq 
\bbZ_{\geq 1 - (1 + 2^{n+1}) + v} \\
& \subseteq 
\bbZ_{\geq 1 - (1 + 2^{n+1}) + 2^n} \\
& \subseteq 
\bbZ_{\geq 1 - (1 + 2^{n}) } \\
\end{align*}
hold for any $v\in \calV$ with $v \geq 2^n$, 
the inclusions 
\begin{align*}
U_n + 2^{n-1}
& \subseteq 
(\bbZ_{\leq 2^{n-3} + 2^{n-3} - (1 + 2^{n+1})} \cup \bbZ_{\geq 2^{n-1} + 2^{n-2} + 1 - (1 + 2^{n+1})}) + 2^{n-1} \\
& \subseteq 
(\bbZ_{\leq 2^{n-2} - (1 + 2^{n+1})} \cup \bbZ_{\geq 2^{n-1} + 2^{n-2} + 1 - (1 + 2^{n+1})}) + 2^{n-1} \\
& \subseteq 
\bbZ_{\leq 2^{n-2} - (1 + 2^{n+1}) + 2^{n-1}} \cup \bbZ_{\geq 2^{n-1} + 2^{n-2} + 1 - (1 + 2^{n+1})+ 2^{n-1}} \\
& \subseteq 
\bbZ_{\leq 2^{n-1}+2^{n-2} - (1 + 2^{n+1}) } \cup \bbZ_{\geq 1 + 2^{n-2} - (1 + 2^{n})}
\end{align*}
hold, the inclusions
\begin{align*}
U_n + 2^{n-2}
& \subseteq 
(\bbZ_{\leq 2^{n-3} + 2^{n-3} - (1 + 2^{n+1})} \cup \bbZ_{\geq 2^{n-1} + 2^{n-2} + 1 - (1 + 2^{n+1})}) + 2^{n-2} \\
& \subseteq 
(\bbZ_{\leq 2^{n-2} - (1 + 2^{n+1})} \cup \bbZ_{\geq 2^{n-1} + 2^{n-2} + 1 - (1 + 2^{n+1})}) + 2^{n-2} \\
& \subseteq 
\bbZ_{\leq 2^{n-2} - (1 + 2^{n+1}) + 2^{n-2}} \cup \bbZ_{\geq 2^{n-1} + 2^{n-2} + 1 - (1 + 2^{n+1})+ 2^{n-2}} \\
& \subseteq 
\bbZ_{\leq 2^{n-3} - (1 + 2^{n+1}) } \cup \bbZ_{\geq 1 - (1 + 2^{n})}
\end{align*}
hold, and the inclusions 
\begin{align*}
U_n + v
& \subseteq 
\bbZ_{2^{n-1} + 2^{n-2} + 2^{n-4} - (1 + 2^{n+1})} + v\\
& \subseteq 
\bbZ_{2^{n-1} + 2^{n-2} + 2^{n-4} - (1 + 2^{n+1}) + v}\\
& \subseteq 
\bbZ_{2^{n-1} + 2^{n-2} + 2^{n-4} - (1 + 2^{n+1}) + 2^{n-4}}\\
& \subseteq 
\bbZ_{2^{n-1} + 2^{n-2} + 2^{n-3} - (1 + 2^{n+1})}\\
\end{align*}
hold for any $v\in \calV$ with $v\leq 2^{n-4}$. This yields Equation \eqref{Eqn:VContainsPositifPowersOf2}. 

Now, we show that Equation \eqref{Eqn:VContains2} holds. 
The inclusions
\begin{align*}
U_n + v
& \subseteq 
\bbZ_{\leq 2^n - ( 1+ 2^{n+1})} + v \\
& \subseteq 
\bbZ_{\leq 2^n - ( 1+ 2^{n+1})+ v}  \\
& \subseteq 
\bbZ_{\leq 2^n - ( 1+ 2^{n+1})+ 2^n}  \\
& \subseteq 
\bbZ_{\leq -1}
\end{align*}
hold for any $n\geq 1$ and $v\in \calV$ with $v \leq 2^n$. 
The inclusions 
\begin{align*}
U_n + 2^{n+1} 
& \subseteq 
\bbZ_{\geq 3 - (1 + 2^{n+1})} + 2^{n+1} \\
& \subseteq 
\bbZ_{\geq 3 - (1 + 2^{n+1}) + 2^{n+1}} \\
& \subseteq 
\bbZ_{\geq 2} \\
\end{align*}
hold for any $n\geq 1$. 
The inclusions
\begin{align*}
U_n + v
& \subseteq 
\bbZ_{\geq 1 - ( 1+ 2^{n+1})} + v \\
& \subseteq 
\bbZ_{\geq 1 - ( 1+ 2^{n+1})+ v}  \\
& \subseteq 
\bbZ_{\geq 1 - ( 1+ 2^{n+1})+ 2^{n+2}}  \\
& \subseteq 
\bbZ_{\geq 2^{n+1}}
\end{align*}
hold for any $n\geq 1$ and $v\in \calV$ with $v \geq 2^{n+2}$. 
This proves that 
$$1 \notin (U\setminus U_0) + \calV = (U \setminus \{-2, -1\}) + \calV.$$
Note that
\begin{align*}
-2 + \calV
& \subseteq 
(-2 + \{1\}) \cup (-2 + (\calV \setminus \{1\})) \\
& \subseteq 
\{-1\} \cup (-2 + (\calV \setminus \{1\})) 
\end{align*}
and the elements of $-2 + (\calV \setminus \{1\})$ are even. So $1$ does not belong to $-2 + \calV$. So $1 \notin (U \setminus \{-1\}) + \calV$, and hence Equation \eqref{Eqn:VContains2} holds. 

Now, we prove that Equations
\eqref{Eqn:VContains1}, 
\eqref{Eqn:VContainsMinus1} 
hold. 
By Equations \eqref{Eqn:RightHalfNotObtainedUV}, \eqref{Eqn:RightHalfNotObtainedLHS1ptUVbis}, we obtain 
$$
(\{25, 26\} - ( 1+ 2^6) )
\cap 
\left(
(U_6 + (\calV \setminus \{2^5\}))
\cup 
(\cup_{m\geq 7} U_m + \calV)
\right)
= 
\emptyset.
$$
It follows that 
$$
(\{25, 26\} - ( 1+ 2^6) )
\cap 
(U_6 + 2^5)
= \emptyset,$$
$$
(\{25, 26\} - ( 1+ 2^6) )
\cap 
((U_0 \cup \cdots \cup U_4) + \calV )
= \emptyset,$$
and hence 
$$
(\{25, 26\} - ( 1+ 2^6) )
\cap 
(\cup_{m\neq  5} U_m + \calV)
= 
\emptyset.
$$
It also follows that 
$$
(\{ 26\} - ( 1+ 2^6) )
\cap 
((U_5 \setminus \{25 - ( 1 + 2^6)\}) + \calV)
= \emptyset,$$
$$
(\{ 26\} - ( 1+ 2^6) )
\cap 
(25 - ( 1 + 2^6) + (\calV\setminus \{1\}))
= \emptyset,$$
$$
(\{ 25\} - ( 1+ 2^6) )
\cap 
((U_5 \setminus \{26 - ( 1 + 2^6)\}) + \calV)
= \emptyset,$$
$$
(\{ 25\} - ( 1+ 2^6) )
\cap 
(26 - ( 1 + 2^6) + (\calV\setminus \{-1\}))
= \emptyset.$$
Consequently, 
Equations
\eqref{Eqn:VContains1}, 
\eqref{Eqn:VContainsMinus1} 
hold. 

By considering the representation of the integer $-4$ as a sum of an element of $U$ and an element of $\calV$, Equation \eqref{Eqn:VContainsMinus2} follows. 

Using Equations \eqref{Eqn:RightHalfNotObtainedUV}, \eqref{Eqn:RightHalfNotObtainedLHS1ptUVbis}, we obtain 
$$
\{-6\}
\cap 
\left( 
(U_4 + (\calV\setminus \{2^3\}) 
\cup 
(\cup_{m\geq 5} U_m + \calV)
\right)
= 
\emptyset.
$$
Then Equation \eqref{Eqn:VContainsMinus4} follows from considering the representation of $-6$. 

Now, we prove that Equation \eqref{Eqn:VContainsNegativePowersOf2} holds. 
Note that the inclusions 
\begin{align*}
\cup_{0 \leq m \leq n-2} U_m + v
& \subseteq
\bbZ_{\geq 1 - (1  +2^{n-1})} + v \\
& \subseteq
\bbZ_{\geq 1 - (1  +2^{n-1}) + v} \\
& \subseteq
\bbZ_{\geq 1 - (1  +2^{n-1}) - 2^{n-1}} \\
& \subseteq
\bbZ_{\geq 1 - (1  +2^{n})} 
\end{align*}
hold for $n\geq 6$ and for any $v\in \calV$ with $v\geq -2^{n-1}$.
The inclusions 
\begin{align*}
\cup_{0 \leq m \leq n-2} U_m + v
& \subseteq
\bbZ_{\leq -1} + v \\
& \subseteq
\bbZ_{\leq -1 + v} \\
& \subseteq
\bbZ_{\leq -1 -2^{n+1}} 
\end{align*}
hold for $n\geq 6$ and for any $v\in \calV$ with $v\leq -2^{n+1}$.
The inclusions 
\begin{align*}
\cup_{0 \leq m \leq n-2} U_m - 2^n
& \subseteq
(\cup_{3 \leq m \leq n-2} \calI_m - 2^n)
\cup
(\{-2, -1\} - 2^n) \\
& \subseteq
(\bbZ_{\leq -9} - 2^n)
\cup
(\{-1, 0\} - (1 + 2^n)) \\
& \subseteq
(\bbZ_{\leq -8 - (1 + 2^n)})
\cup
(\{-1, 0\} - (1 + 2^n)) \\
& =
(\bbZ_{\leq 2^n-8 - (1 + 2^{n+1})})
\cup
(\{2^n-1,2^n\} - (1 + 2^{n+1}))
\end{align*}
hold for $n\geq 6$.
These inclusions prove that for $n\geq 6$, the third largest element of $\calI_n$ does not belong to $\cup_{0 \leq m \leq n-2} U_m + \calV$, i.e., 
$$2^n - 2 - (1 + 2^{n+1}) \notin \cup_{0 \leq m \leq  n-2} U_m + \calV.$$
Combining the above with Equations \eqref{Eqn:RightHalfNotObtainedUV}, \eqref{Eqn:RightHalfNotObtainedLHS1ptUVbis}, we obtain 
$$
2^n - 2  - (1 + 2^{n+1}) \notin 
\left(\cup_{m \geq 0, m \neq n-1, n, n+1} U_m + \calV\right) 
\cup 
(U_{n+1} + (\calV\setminus\{2^n\}))
\quad 
\text{ for } n\geq 6.
$$
Since the inclusions 
\begin{align*}
U_{n+1} + 2^n
& \subseteq 
\bbZ_{\leq 2^n + 2^{n-1} + 2^{n-3} - (1 + 2^{n+2})} + 2^n \\
& \subseteq 
\bbZ_{\leq 2^n + 2^{n-1} + 2^{n-3} - (1 + 2^{n+2}) + 2^n }\\
& \subseteq 
\bbZ_{\leq 2^{n-1} + 2^{n-3} - (1 + 2^{n+1}) }\\
& \subseteq 
\bbZ_{\leq 2^{n-1} + 2^{n-1}-3 - (1 + 2^{n+1}) }\\
& \subseteq 
\bbZ_{\leq 2^n-3 - (1 + 2^{n+1}) }
\end{align*}
hold for $n\geq 6$, it follows that 
$$
2^n - 2  - (1 + 2^{n+1}) \notin 
\left(\cup_{m \geq 0, m \neq n-1, n} U_m + \calV\right) 
\quad 
\text{ for } n\geq 6.
$$
The inclusions 
\begin{align*}
U_n  + v 
& \subseteq 
\bbZ_{\geq 1 - (1 + 2^{n+1})} + v\\
& \subseteq 
\bbZ_{\geq 1 - (1 + 2^{n+1}) + v}\\
& \subseteq 
\bbZ_{\geq 1 - (1 + 2^{n+1}) + 2^n}\\
& = 
\bbZ_{\geq 2^n + 1 - (1 + 2^{n+1})}
\end{align*}
hold for any $n\geq 5$ and for any $v\in \calV$ with $v \geq 2^n$. 
The inclusions 
\begin{align*}
U_n  + 2^{n-1}
& \subseteq 
\left(
\bbZ_{\leq 2^{n-3} + 2^{n-3} - ( 1 + 2^{n+1})} 
\cup
\bbZ_{\geq 2^{n-1} + 2^{n-2} + 1 - (1 + 2^{n+1})}
\right)
+ 2^{n-1} \\
& = 
\bbZ_{\leq 2^{n-3} + 2^{n-3} + 2^{n-1}- ( 1 + 2^{n+1})} 
\cup
\bbZ_{\geq 2^{n-1} + 2^{n-2} + 1 + 2^{n-1}- (1 + 2^{n+1})} \\
& = 
\bbZ_{\leq 2^{n-2} + 2^{n-1}- ( 1 + 2^{n+1})} 
\cup
\bbZ_{\geq 2^n +  2^{n-2} + 1 - (1 + 2^{n+1})} \\
& = 
\bbZ_{\leq 2^n - 2^{n-2}- ( 1 + 2^{n+1})} 
\cup
\bbZ_{\geq  2^n  +2^{n-2} + 1 - (1 + 2^{n+1})} \\
& \subseteq  
\bbZ_{\leq 2^n - 3 - ( 1 + 2^{n+1})} 
\cup
\bbZ_{\geq  2^n + 2^{n-2} + 1 - (1 + 2^{n+1})}
\end{align*}
hold for any $n\geq 6$.
The inclusions 
\begin{align*}
U_n  + 2^{n-2}
& \subseteq 
\left(
\bbZ_{\leq 2^{n-3} + 2^{n-3} - ( 1 + 2^{n+1})} 
\cup
\bbZ_{\geq 2^{n-1} + 2^{n-2} + 1 - (1 + 2^{n+1})}
\right)
+ 2^{n-2} \\
& = 
\bbZ_{\leq 2^{n-3} + 2^{n-3} + 2^{n-2}- ( 1 + 2^{n+1})} 
\cup
\bbZ_{\geq 2^{n-1} + 2^{n-2} + 1 + 2^{n-2}- (1 + 2^{n+1})} \\
& = 
\bbZ_{\leq  2^{n-1}- ( 1 + 2^{n+1})} 
\cup
\bbZ_{\geq 2^n +1- (1 + 2^{n+1})}
\end{align*}
hold for any $n\geq 6$. 
The inclusions 
\begin{align*}
U_n  + v 
& \subseteq 
\bbZ_{\leq 2^{n-1} + 2^{n-2} + 2^{n-4} - (1 + 2^{n+1})} + v\\
& \subseteq 
\bbZ_{\leq 2^{n-1} + 2^{n-2} + 2^{n-4} - (1 + 2^{n+1}) + v}\\
& \subseteq 
\bbZ_{\leq 2^{n-1} + 2^{n-2} + 2^{n-4} - (1 + 2^{n+1}) + 2^{n-3}}\\
& = 
\bbZ_{\leq 2^{n-1} + 2^{n-2} +2^{n-3} +  2^{n-4} - (1 + 2^{n+1}) }\\
& = 
\bbZ_{\leq 2^n -   2^{n-4} - (1 + 2^{n+1}) }\\
& \subseteq 
\bbZ_{\leq 2^n -   3 - (1 + 2^{n+1}) }\\
\end{align*}
hold for any $n\geq 6$ and for any $v\in \calV$ with $v \leq 2^{n-3}$. 
Consequently, we obtain 
$$
2^n - 2  - (1 + 2^{n+1}) \notin 
\left(\cup_{m \geq 0, m \neq n-1} U_m + \calV\right) 
\quad 
\text{ for } n\geq 6.
$$
The inclusions 
\begin{align*}
U_{n-1} + v 
& \subseteq 
\bbZ_{\geq 2^{n-4} + 1 - (1 + 2^n)} +v\\
& \subseteq 
\bbZ_{\geq 2^{n-4} + 1 - (1 + 2^n) - 2^{n-4} }\\
& = 
\bbZ_{\geq 1 - (1 + 2^n) }\\
& = 
\bbZ_{\geq 2^n + 1 - (1 + 2^{n+1}) }
\end{align*}
hold for $n\geq 6$ and for any $v\in \calV$ with $v \geq -2^{n-4}$. 
The inclusions 
\begin{align*}
U_{n-1} - 2^{n-2}  
& \subseteq 
(
\bbZ_{\leq 2^{n-4} + 2^{n-4} - (1 + 2^n) } 
\cup 
\bbZ_{\geq 2^{n-2} + 2^{n-3} + 1 - (1 + 2^n)} 
) 
- 2^{n-2} \\
& = 
(
\bbZ_{\leq 2^{n-3} - (1 + 2^n) } 
\cup 
\bbZ_{\geq 2^{n-2} + 2^{n-3} + 1 - (1 + 2^n)} 
) 
- 2^{n-2} \\
& = 
\bbZ_{\leq 2^{n-3} - (1 + 2^n) - 2^{n-2}} 
\cup 
\bbZ_{\geq 2^{n-2} + 2^{n-3} + 1 - (1 + 2^n)- 2^{n-2}}  \\
& = 
\bbZ_{\leq 2^{n-3} - (1 + 2^n) - 2^{n-2}} 
\cup 
\bbZ_{\geq 2^{n-3} + 1 - (1 + 2^n)}  \\
& = 
\bbZ_{\leq 2^{n-1} + 2^{n-2} + 2^{n-3} - (1 + 2^{n+1})} 
\cup 
\bbZ_{\geq 2^{n-3} + 1 - (1 + 2^n)}  \\
& \subseteq 
\bbZ_{\leq 2^{n-1} + 2^{n-2} + 2^{n-2} - 4 - (1 + 2^{n+1})} 
\cup 
\bbZ_{\geq 2^{n-3} + 1 - (1 + 2^n)}  \\
& = 
\bbZ_{\leq 2^n - 4 - (1 + 2^{n+1})} 
\cup 
\bbZ_{\geq 2^n + 2^{n-3} + 1 - (1 + 2^{n+1})}
\end{align*}
hold for $n\geq 6$.
The inclusions 
\begin{align*}
U_{n-1} + v 
& \subseteq 
\bbZ_{\leq 2^{n-2} + 2^{n-3} + 2^{n-5} - (1 + 2^n)} +v\\
& = 
\bbZ_{\leq 2^{n-2} + 2^{n-3} + 2^{n-5} - (1 + 2^n) +v}\\
& \subseteq 
\bbZ_{\leq 2^{n-2} + 2^{n-3} + 2^{n-5} - (1 + 2^n) - 2^{n-1}}\\
& = 
\bbZ_{\leq 2^{n-1} + 2^{n-2} + 2^{n-3} + 2^{n-5} - (1 + 2^{n+1})}\\
& \subseteq  
\bbZ_{\leq 2^{n-1} + 2^{n-2} + 2^{n-3} + 2^{n-3} -4 - (1 + 2^{n+1})}\\
& =  
\bbZ_{\leq 2^{n} -4 - (1 + 2^{n+1})}
\end{align*}
hold for $n\geq 6$ and for any $v\in \calV$ with $v \leq -2^{n-1}$. 
This yields
$$
2^n - 2  - (1 + 2^{n+1}) \notin 
\left(\cup_{m \geq 0} U_m + \calV\right) 
\cup 
(U_{n-1} + (\calV\setminus \{ - 2^{n-3} \}))
\quad 
\text{ for } n\geq 6.
$$ 
So Equation \eqref{Eqn:VContainsNegativePowersOf2} follows. 
This establishes all of the nine claims, and hence $\calV$ is a minimal complement of $U$. 
\end{proof}

\begin{theorem}
\label{Thm:CoMinU}
Let $U = \{ -u_1, - u_2, -u_3, \cdots\}$ where $u_1 < u_2 < u_3< \cdots$. Define $U_1 = U$ and for each positive integer $i\geq 1$, define 
$$U_{i+1} 
: = 
\begin{cases}
U_i \setminus \{-u_i\} & \text{ if $U_i \setminus \{-u_i\}$ is a complement to $\calV$,}\\
U_i & \text{ otherwise.}
\end{cases}
$$
Let $\calU$ denote the subset $\cap _{i \geq 1} U_i$ of $\bbZ$. The subsets $\calU$ and $\calV$ of $\bbZ$ form a co-minimal pair.
\end{theorem}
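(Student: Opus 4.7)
My plan is to imitate the proof of Theorem \ref{Thm:CoMinS} in this symmetric setting. By the inductive construction, each $U_i$ is an additive complement of $\calV$, and if $u\in\calU$ were removable (i.e.\ $(\calU\setminus\{u\})+\calV=\bbZ$) then the greedy step at the moment $-u$ was considered would have discarded $u$, contradicting $u\in\calU$. So once I show $\calU+\calV=\bbZ$, minimality of $\calU$ as a complement of $\calV$ comes for free. Minimality of $\calV$ as a complement of $\calU$ is essentially free as well: for any $v\in\calV$, the inclusion $\calU\subseteq U$ gives $(\calV\setminus\{v\})+\calU\subseteq (\calV\setminus\{v\})+U$, and the right-hand side is a proper subset of $\bbZ$ by Theorem \ref{Thm:RequiredClustersV}. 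Hence the whole theorem reduces to proving $\calU+\calV=\bbZ$.

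To prove this equality, I would split $\bbZ$ into two regions and handle them by different mechanisms. The first region consists of the right halves of the blocks $\calI_n$ for $n\geq 2$. For such a $y$, Equations \eqref{Eqn:RightHalfNotObtainedUV} and \eqref{Eqn:RightHalfNotObtainedLHS1ptUVbis} force every representation $y=u+v$ with $u\in U$, $v\in\calV$ to satisfy $u\in U_n\cup U_{n+1}$, a finite set. Writing $y=u_i+v_i$ with $u_i\in U_i, v_i\in\calV$ at each stage $i$, the pigeonhole principle produces a single $u_y\in U$ with $u_y=u_i$ for infinitely many $i$; consequently $u_y$ survives every stage of the greedy process, so $u_y\in\calU$ and $v_y:=y-u_y\in\calV$. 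This is precisely the mechanism used in Theorem \ref{Thm:CoMinS}.

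The second region (the left halves of each $\calI_n$ for $n\geq 1$, the sets $\calI_0,\calI_1$, the right halves of $\calI_2,\calI_3$, and the nonnegative integers) must be covered by elements of $U$ that cannot be removed at any step. To that end I would formulate and prove an analogue of Theorem \ref{Thm:RequiredClusters} for the pair $(U,\calV)$: namely, that $-2,-1\in U_0$, the clusters $2^{n+2}+2^{n+1}+\{1,\ldots,2^{n-1}\}-(1+2^{n+4})$ inside $U_{n+3}$ that yield the left halves, and the clusters inside $U_n, U_{n+1}$ producing the third quarter and the right quarter of each $\calI_n$ (as they appear in the proof of Proposition \ref{Prop:VAddComp}), all belong to every nonempty $\scrU\subseteq U$ satisfying $\scrU+\calV=\bbZ$. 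The verification proceeds by the same style of interval comparisons used in Theorem \ref{Thm:RequiredClustersV}: for each such cluster I would show that removing any of its points leaves some specific target integer with no alternative representation as $u+v$ with $u\in U$ outside the cluster and $v\in\calV$. Combined with the pigeonhole argument of the previous paragraph, this establishes $\calU+\calV=\bbZ$ and hence the co-minimality.

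The principal obstacle is precisely this analogue of Theorem \ref{Thm:RequiredClusters}: the bookkeeping is substantially more intricate than in the $(S,\calT)$ case because $\calV$ is symmetric, so every element of $U$ potentially admits a representation with a positive $v$ and another with a negative $v$, roughly doubling the number of boundary estimates of the form $U_m+v\subseteq\bbZ_{\geq\ast}$ or $\bbZ_{\leq\ast}$ that have to be checked. Once these essentiality claims are in place, the remaining minimality and pigeonhole arguments assemble exactly as in Theorem \ref{Thm:CoMinS}.
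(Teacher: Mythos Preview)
Your overall strategy is sound, but you are making the proof substantially harder than it needs to be, and the paper's route is cleaner. The paper does \emph{not} split $\bbZ$ into two regions and does \emph{not} prove any analogue of Theorem~\ref{Thm:RequiredClusters} for the $U$ side. Instead, it proves a single uniform finiteness statement (Equation~\eqref{Eqn:UVFiniteness}): for every $n\geq 4$,
\[
\{-2^{n-3},\ldots,2^{n-3}-1\}\cap\bigl(\textstyle\bigcup_{m\geq n}U_m+\calV\bigr)=\emptyset,
\]
established by the same kind of interval estimates you describe. Since every $y\in\bbZ$ lies in such an interval for $n$ large, this shows that \emph{every} integer has only finitely many representations $y=u+v$ with $u\in U$, $v\in\calV$. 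The pigeonhole argument then runs uniformly over all of $\bbZ$, yielding $\calU+\calV=\bbZ$ in one stroke. Your ``principal obstacle'' --- the essentiality analysis of clusters in $U$ --- simply never arises.

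One small correction to your first-region argument: Equations~\eqref{Eqn:RightHalfNotObtainedUV} and~\eqref{Eqn:RightHalfNotObtainedLHS1ptUVbis} only exclude $m\geq n+2$ (and restrict $U_{n+1}$), unlike Equation~\eqref{Eqn:RightHalfNotObtained} in Section~2 which also excludes $m<n$. So representations with $u\in U_m$ for $m<n$ are not ruled out by those equations (e.g.\ $-2^n-1=(-1)+(-2^n)$ with $-1\in U_0$). This does not damage your finiteness conclusion, since $\bigcup_{m\leq n+1}U_m$ is still finite, but it does mean that the pigeonhole mechanism already suffices for the right halves without any cluster-essentiality input --- and the paper's observation is that it suffices for \emph{all} of $\bbZ$.
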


\begin{proof}
Note that for any $n\geq 4$ and any $m\geq n$, the inclusions
\begin{align*}
U_m + v
& \subseteq 
\bbZ_{\leq 2^m - (1 + 2^{m+1})} + v\\
& = 
\bbZ_{\leq 2^m - (1 + 2^{m+1}) + v}\\
& \subseteq 
\bbZ_{\leq 2^m - (1 + 2^{m+1}) + 2^{m-1}}\\
& = 
\bbZ_{\leq - (1 + 2^{m-1}) }\\
& \subseteq 
\bbZ_{\leq - 2^{m-3} -1}\\
& \subseteq 
\bbZ_{\leq - 2^{n-3}-1 }\\
\end{align*}
hold for any $v\leq 2^{m-1}$, the inclusions
\begin{align*}
U_m + 2^m
& \subseteq 
\bbZ_{\leq 2^{m-1} + 2^{m-2} + 2^{m-4} - (1 + 2^{m+1})} + 2^m \\
& = 
\bbZ_{\leq 2^{m-1} + 2^{m-2} + 2^{m-4} - (1 + 2^{m+1}) + 2^m} \\
& \subseteq  
\bbZ_{\leq 2^{m-1} + 2^{m-2} + 2^{m-3} - (1 + 2^{m+1}) + 2^m} \\
& =  
\bbZ_{\leq 2^{m-1} + 2^{m-2} + 2^{m-3} + 2^{m-3} - 2^{m-3} -  (1 + 2^{m+1}) + 2^m} \\
& =  
\bbZ_{\leq - 2^{m-3} -1 } \\
& \subseteq 
\bbZ_{\leq - 2^{n-3} -1} 
\end{align*}
hold, the inclusions 
\begin{align*}
U_m + v
& \subseteq 
\bbZ_{\geq 2^{m-3} + 1 - (1 + 2^{m+1})} + v \\
& = 
\bbZ_{\geq 2^{m-3} + 1 - (1 + 2^{m+1}) + v} \\
& \subseteq 
\bbZ_{\geq 2^{m-3} + 1 - (1 + 2^{m+1}) + 2^{m+1}} \\
& = 
\bbZ_{\geq 2^{m-3}} \\
& \subseteq 
\bbZ_{\geq 2^{n-3}}
\end{align*}
hold for any $v\geq 2^{m+1}$. 
It follows that 
\begin{equation}
\label{Eqn:UVFiniteness}
\{- 2^{n-3}, - 2^{n-3} +1, - 2^{n-3} + 2, \cdots, -2, -1, 0, 1, 2, 3, \cdots, 2^{n-3}-1\}
\cap
(\cup_{m\geq n} U_m + \calV )
= \emptyset
\end{equation}
for any $n\geq 4$.
Consequently, for any element $y\in \bbZ$, the equation $y = u +v$ holds for finitely many pairs $(u, v)\in U \times \calV$. 

For any $y\in \bbZ$ and for any $i\geq 1$, there exist elements $u_{y, i}\in U_i , v_{y, i}\in \calV$ such that $y = u_{y, i} + v_{y, i}$. 
It follows that for some element $u_y \in U$, the equality $u_y = u_{y, i}$ holds for infinitely many $i$ and for such integers $i$, we have $v_y = v_{y, i}$ where $v_y: = y - u_y\in \calV$. Thus $y - v_y = u_{y, i}$ holds for infinitely many $i$. Hence, for each integer $i\geq 1$, there exists an integer $m_i \geq i$ such that $y - v_y = u_{y, m_i}$, which yields $y\in v_y  + U_{m_i} \subseteq v_y + U_i$. Thus $y$ lies in $v_y +  \calU$, i.e., $y\in \calU + \calV$. 
Hence $\calU$ is an additive complement of $\calV$. It follows that $\calU$ is a minimal complement to $\calV$. 
By Theorem \ref{Thm:RequiredClustersV}, it follows that $\calV$ is a minimal complement to $\calU$. This proves that $(\calU, \calV)$ is a co-minimal pair. 
\end{proof}

\section{Co-minimal pairs in the integral lattices}

\begin{proposition}
\label{Prop:ABCD}
Let $H$ be a subgroup of an abelian group $G$. Let $(A, B)$ be a co-minimal pair in $H$, and $(C, D)$ be subsets of $G$ whose images form a co-minimal pair in $G/H$, and the restrictions of the map $G \to G/H$ on $C$ and $D$ induces bijections onto respective images. 
Then $(A+C, B+D)$ is a co-minimal pair in $G$. 
\end{proposition}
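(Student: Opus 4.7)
The plan is to verify the three defining properties of a co-minimal pair for $(A+C, B+D)$ in $G$: that their sumset equals $G$, that removing any single element from $A+C$ destroys the covering property, and the symmetric statement for $B+D$. Throughout, I would exploit the bijectivity of $C \to \bar{C}$ and $D \to \bar{D}$ to lift representation identities from $G/H$ back to $G$ without ambiguity; write $\bar{x}$ for the image of $x \in G$ in $G/H$.

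For the sumset identity $(A+C) + (B+D) = G$: given $g \in G$, use $\bar{C} + \bar{D} = G/H$ to find $c \in C$, $d \in D$ with $\bar{c} + \bar{d} = \bar{g}$. Then $g - c - d \in H$, and $A + B = H$ yields $a \in A$, $b \in B$ with $g - c - d = a + b$, whence $g = (a+c) + (b+d)$.

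For the minimality of $A+C$ as a complement to $B+D$, fix $a_0 + c_0 \in A+C$. The minimality of $A$ in $H$ furnishes an element $h_0 \in H$ such that every representation $h_0 = a + b$ with $a \in A$, $b \in B$ forces $a = a_0$. The minimality of $\bar{C}$ in $G/H$ furnishes $\bar{g}_0 \in G/H$ such that every representation $\bar{g}_0 = \bar{c} + \bar{d}$ with $\bar{c} \in \bar{C}$, $\bar{d} \in \bar{D}$ forces $\bar{c} = \bar{c}_0$. Pick $d_0 \in D$ realizing $\bar{g}_0 = \bar{c}_0 + \bar{d}_0$, and set $g := h_0 + c_0 + d_0$. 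Any decomposition $g = (a + c) + (b + d)$ with $a \in A$, $b \in B$, $c \in C$, $d \in D$ descends to $\bar{c} + \bar{d} = \bar{g}_0$, which forces $\bar{c} = \bar{c}_0$, and hence $c = c_0$ by the bijectivity of $C \to \bar{C}$; then $\bar{d} = \bar{d}_0$, and the bijectivity of $D \to \bar{D}$ gives $d = d_0$. Substituting these back in $g = h_0 + c_0 + d_0$ yields $a + b = h_0$, whence $a = a_0$. Thus $a + c = a_0 + c_0$, so no element of $(A+C) \setminus \{a_0 + c_0\}$ can pair with an element of $B+D$ to produce $g$. The minimality of $B+D$ as a complement to $A+C$ follows by the symmetric argument.

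The main conceptual point I expect to require care is precisely the use of the two bijectivity hypotheses: they are what permit translating the uniqueness conclusion $\bar{c} = \bar{c}_0$ in $G/H$ into the set-theoretic equality $c = c_0$ in $G$, which is indispensable for reducing the minimality in $G$ to the given minimalities in $H$ and $G/H$. Without this hypothesis, several distinct lifts of $\bar{c}_0$ to $C$ could combine with compensating elements of $H$ to produce $g$ from a point other than $a_0 + c_0$, and the reduction would collapse. Otherwise the argument is a clean two-level minimality bookkeeping that merely layers the hypotheses in $H$ and in $G/H$.
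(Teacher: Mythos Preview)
Your proof is correct and follows essentially the same approach as the paper: both arguments first pass to $G/H$ to pin down $c$ and $d$ via the bijectivity hypotheses, then work inside $H$ to pin down $a$. The only cosmetic difference is that the paper phrases the minimality step as a proof by contradiction, whereas you construct the witness element $g$ directly; the underlying mechanism is identical.
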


\begin{proof}
Note that $A+C + B+ D = G$. 

Let $a\in A, c\in C$ be such that $(A+C)\setminus \{a+c\}$ is a complement to $B+D$. Let $b\in B, d\in D$ be such that 
$a +b \notin (A \setminus \{a\}) \cdot  B$, $c+ d \modu H \notin (C \setminus \{c\}) \cdot D\modu H$. 
Since $a+c+b+d\in ((A+C)\setminus \{a+c\}) +B +D$, it follows that 
$a+c+b+d = a'+c'+b'+d'$ for some $a'\in A, c'\in C, b'\in B, d'\in D$ with $a'+c'\notin (A+C) \setminus \{a+c\}$. 
This implies $c+d \modu H = c'+d'\modu H$, and hence $c \equiv c'  \modu H, d \equiv d' \modu H$, and thus $c = c', d = d'$, which yields $a+b = a'+b'$. Since $a + b \notin (A \setminus \{a\}) +  B$, we obtain $a = a', b = b'$. This contradicts the fact that $a'+c'\notin (A+C) \setminus \{a+c\}$. Hence $A+C$ is a minimal complement to $B+D$. Similarly, it follows that $B+D$ is a minimal complement to $A+C$. 
\end{proof}

As an application of the above result, we obtain the following result. 

\begin{corollary}
Let $\calS = \{s_1, s_2, \cdots \}$. For any two sequences $\{x_n\}_{n\geq 1}$ and $\{y_n\}_{n\geq 1}$, each of 
$$
(
\{
(x_i + s_j , s_i)\,|\, i, j \geq 1
\}, 
\{
(y_u + 2^{v-1} , 2^{u-1})\,|\, u, v \geq 1
\}),$$
$$
(
\{
(x_i + 2^{j-1} , s_i)\,|\, i, j \geq 1
\}, 
\{
(y_u + s_v , 2^{u-1})\,|\, u,v \geq 1
\})$$
is a co-minimal pair in $\bbZ^2$. 
\end{corollary}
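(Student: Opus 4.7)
The plan is to deduce both co-minimal pairs as immediate applications of Proposition \ref{Prop:ABCD}, with $G = \bbZ^2$ and $H = \bbZ \times \{0\}$, so that $G/H$ is canonically identified with $\bbZ$ via projection onto the second coordinate.

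For the first pair, I take $A = \calS \times \{0\}$ and $B = \calT \times \{0\}$ inside $H$. Since $(\calS, \calT)$ is a co-minimal pair in $\bbZ$ by Theorem \ref{Thm:CoMinS}, the pair $(A, B)$ is co-minimal in $H$. I then set $C = \{(x_i, s_i) : i \geq 1\}$ and $D = \{(y_u, 2^{u-1}) : u \geq 1\}$. The restriction of the quotient map $G \to G/H$ to $C$ (respectively $D$) is injective because the second coordinates $s_i$ (respectively $2^{u-1}$) are pairwise distinct, and the images of $C$ and $D$ in $G/H \cong \bbZ$ are exactly $\calS$ and $\calT$, which form a co-minimal pair. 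A direct Minkowski-sum computation gives
$$A+C = \{(x_i + s_j, s_i) : i, j \geq 1\}, \qquad B+D = \{(y_u + 2^{v-1}, 2^{u-1}) : u, v \geq 1\},$$
so Proposition \ref{Prop:ABCD} yields the first co-minimality claim.

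The second pair is obtained from the same data by swapping the roles of $A$ and $B$ inside $H$, i.e., taking $A = \calT \times \{0\}$ and $B = \calS \times \{0\}$ (using that co-minimality is symmetric), while keeping $C$ and $D$ unchanged. The corresponding sumsets are
$$A+C = \{(x_i + 2^{j-1}, s_i) : i, j \geq 1\}, \qquad B+D = \{(y_u + s_v, 2^{u-1}) : u, v \geq 1\},$$
which matches the second pair in the statement, and a second invocation of Proposition \ref{Prop:ABCD} finishes the proof.

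No substantive obstacle is anticipated: both key ingredients (the existence of the pair $(\calS, \calT)$ from Theorem \ref{Thm:CoMinS} and the product-type reduction from Proposition \ref{Prop:ABCD}) are already in hand. The only points to verify carefully are the injectivity of the projection on $C$ and on $D$, and the explicit identification of the Minkowski sums displayed above; both are immediate from the definitions.
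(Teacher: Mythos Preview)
Your proposal is correct and follows essentially the same approach as the paper: apply Proposition \ref{Prop:ABCD} with $H=\bbZ\times\{0\}$, taking $(A,B)=(\calS\times\{0\},\calT\times\{0\})$ (respectively $(\calT\times\{0\},\calS\times\{0\})$) and $C=\{(x_i,s_i)\}$, $D=\{(y_u,2^{u-1})\}$, whose images in $G/H$ are $\calS$ and $\calT$. The paper's proof is terser but identical in substance.
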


\begin{proof}

For any two sequences $\{x_n\}_{n\geq 1}$ and $\{y_n\}_{n\geq 1}$, the subsets 
$$(\calS,0) + \{(x_1, s_1), (x_2, s_2) , (x_3, s_3), \cdots \} 
=
\{
(x_i + s_j , +s_i)\,|\, i, j \geq 1
\}, $$
$$(\calT,0) + \{(y_1, 1), (y_2, 2) , (y_3, 2^2), \cdots \} 
=
\{
(y_u + 2^{v-1} , 2^{u-1})\,|\, u,v \geq 1
\}$$
of $\bbZ^2$ form a co-minimal pair in $\bbZ^2$, and the subsets 
$$(\calT,0) + \{(x_1, s_1), (x_2, s_2) , (x_3, s_3), \cdots \} 
=
\{
(x_i + 2^{j-1} , s_i)\,|\, i, j \geq 1
\}, $$
$$(\calS,0) + \{(y_1, 1), (y_2, 2) , (y_3, 2^2), \cdots \} 
=
\{
(y_u + s_v , 2^{u-1})\,|\, u,v \geq 1
\}$$
of $\bbZ^2$ form a co-minimal pair in $\bbZ^2$ by Proposition \ref{Prop:ABCD}.

\end{proof}

\begin{theorem}
\label{Thm:AExists}
Let $\sigma$ be an automorphism of $\bbZ^{n}$ such that there exists an increasing chain of subgroups
$$0 = M_0 \subsetneq M_1 \subsetneq \cdots \subsetneq M_r = \bbZ^{n}$$
of $\bbZ^{n}$ such that each of them is stable under the action of $\sigma$ and the successive quotients $M_i/M_{i-1}$ are free of rank at most two and for any such quotient, the restriction of $\sigma$ to it is the identity map if the quotient is of rank one, or conjugate to some element of $\gln_2(\bbZ)$ having exactly two nonzero entries, i.e., some of 
\begin{equation}
\label{Eqn:8Matrix}
\begin{pmatrix}
1 & 0 \\
0 & 1
\end{pmatrix}
,
\begin{pmatrix}
1 & 0 \\
0 & -1
\end{pmatrix}
,
\begin{pmatrix}
-1 & 0 \\
0 & 1
\end{pmatrix}
,
\begin{pmatrix}
-1 & 0 \\
0 & -1
\end{pmatrix}
,
\begin{pmatrix}
0 & 1 \\
1 & 0 
\end{pmatrix}
,
\begin{pmatrix}
0 & -1 \\
1 & 0 
\end{pmatrix}
,
\begin{pmatrix}
0 & 1 \\
-1 & 0 
\end{pmatrix}
,
\begin{pmatrix}
0 & -1 \\
-1 & 0 
\end{pmatrix}
\footnote{
Note that the above eight matrices form the subgroup 
$$
\left
\{
\begin{pmatrix}
a & b \\
c & d
\end{pmatrix}
\in \gln_2(\bbZ)\,|\, \text{ exactly two of $a, b, c, d$ are equal to zero}
\right
\}
$$
of $\gln_2(\bbZ)$.
}
\end{equation}
if the quotient is of rank two. 
Then there exists a subset $A$ of $\bbZ^{n}$ such that $(A, \sigma(A))$ is a co-minimal pair in $\bbZ^n$. 
\end{theorem}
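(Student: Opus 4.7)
The plan is to induct on the length $r$ of the filtration, using Proposition~\ref{Prop:ABCD} to glue a co-minimal pair on $M_{r-1}$ to one on the quotient $M_r/M_{r-1}$.

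Base case ($r = 1$): Here $\bbZ^n$ is itself free of rank at most two. In the rank-one identity case, I invoke the existence of a subset $A \subset \bbZ$ with $(A, A)$ a co-minimal pair---equivalently, $A + A = \bbZ$ and $A$ free of $3$-term arithmetic progressions---guaranteed by the Kwon/CoMin1 characterisation \cite{Kwon, CoMin1} together with an explicit construction of such a $3$-AP-free additive basis of $\bbZ$. In the rank-two case, after choosing a basis that conjugates $\sigma$ to one of the eight matrices of \eqref{Eqn:8Matrix}, I construct $A$ as a Cartesian product with each factor drawn from $\{\pm\calS, \pm\calT, \pm\calU, \pm\calV\}$, using the pairs $(\calS, \calT)$ and $(\calU, \calV)$ supplied by Theorems~\ref{Thm:CoMinST} and \ref{Thm:CoMinUV}. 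For instance, for $\sigma = \left(\begin{smallmatrix}0 & 1\\ -1 & 0\end{smallmatrix}\right)$, taking $A = \calU \times \calV$ gives $\sigma(A) = \calV \times (-\calU)$, and the symmetry $\calV = -\calV$ combined with Proposition~\ref{Prop:ABCD} (applied to $H = \{0\} \times \bbZ$) delivers co-minimality; the template $(-\calS) \times \calT$ handles $\left(\begin{smallmatrix}0 & -1\\ -1 & 0\end{smallmatrix}\right)$, and the remaining six matrices follow from analogous product constructions with appropriate sign flips and coordinate swaps.

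Inductive step ($r \geq 2$): Apply the inductive hypothesis to the sub-chain $0 = M_0 \subsetneq M_1 \subsetneq \cdots \subsetneq M_{r-1}$ inside $M_{r-1}$ (of length $r - 1$, stable under $\sigma|_{M_{r-1}}$, with quotients satisfying the same hypotheses) to obtain $A_0 \subset M_{r-1}$ with $(A_0, \sigma(A_0))$ co-minimal in $M_{r-1}$. Next apply the base case to the induced automorphism $\bar\sigma$ on the rank-$\leq 2$ free quotient $M_r/M_{r-1}$ to obtain $\bar C \subset M_r/M_{r-1}$ with $(\bar C, \bar\sigma(\bar C))$ co-minimal there. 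Since $M_r/M_{r-1}$ is free abelian, fix any $\bbZ$-linear section $s \colon M_r/M_{r-1} \to M_r$ and put $C = s(\bar C)$. The projection $\pi \colon M_r \to M_r/M_{r-1}$ is bijective on $C$ by construction, and from $\pi \circ \sigma = \bar\sigma \circ \pi$ together with the injectivity of $\sigma$ and $\bar\sigma$ one sees that $\pi$ is bijective from $\sigma(C)$ onto $\bar\sigma(\bar C)$ as well. Proposition~\ref{Prop:ABCD} applied with $H = M_{r-1}$, $(A, B) = (A_0, \sigma(A_0))$, $(C, D) = (C, \sigma(C))$ then produces the desired co-minimal pair $(A_0 + C,\, \sigma(A_0 + C))$ in $\bbZ^n$.

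The main obstacle is the rank-two base case: each of the eight matrices requires its own explicit product construction together with a direct verification of co-minimality. The anti-diagonal cases are handled cleanly by the $\calU \times \calV$ and $(-\calS) \times \calT$ templates already sketched after Questions~\ref{Qn:AExists}--\ref{Qn:AExistsQuadrant}; the diagonal matrices (in particular the identity and $-I$) are more delicate and will demand either a symmetric factor in one coordinate or a second application of Proposition~\ref{Prop:ABCD} to combine ingredients from two different co-minimal pairs. Once the eight cases are settled, the inductive step is essentially formal, with the only nontrivial check being the bijectivity of $\pi$ on $C$ and $\sigma(C)$, which is a routine consequence of having fixed a splitting.
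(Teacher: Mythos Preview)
Your overall architecture---induction along the filtration via Proposition~\ref{Prop:ABCD}, with the rank-$\leq 2$ quotients handled as the base---matches the paper, and your inductive step is correct. The gap is in the rank-two base case for the three non-identity diagonal matrices $\left(\begin{smallmatrix}1 & 0\\ 0 & -1\end{smallmatrix}\right)$, $\left(\begin{smallmatrix}-1 & 0\\ 0 & 1\end{smallmatrix}\right)$, $\left(\begin{smallmatrix}-1 & 0\\ 0 & -1\end{smallmatrix}\right)$. For a product $A = X \times Y$ and, say, $\sigma = \left(\begin{smallmatrix}1 & 0\\ 0 & -1\end{smallmatrix}\right)$ one has $\sigma(A) = X \times (-Y)$, and a direct check shows that $(X \times Y,\, X \times (-Y))$ is co-minimal in $\bbZ^2$ if and only if both $(X,X)$ and $(Y,-Y)$ are co-minimal pairs in $\bbZ$. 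Nothing in your list $\{\pm\calS, \pm\calT, \pm\calU, \pm\calV\}$ furnishes such a $Y$: the only symmetric candidate is $\calV$, but $\calV + \calV$ omits for instance $11$, so $(\calV,\calV)$ is not even a complementing pair. Your fallback to ``a second application of Proposition~\ref{Prop:ABCD}'' hits the same wall, because every $\sigma$-stable rank-one summand $H \subset \bbZ^2$ has the induced map equal to $-1$ on exactly one of $H$ and $\bbZ^2/H$, so that factor still demands a co-minimal pair of the form $(Y,-Y)$ in $\bbZ$---which the paper never constructs.

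The paper does \emph{not} use Cartesian products for these three matrices. Instead it takes an ``axes'' set of the shape
\[
A \;=\; \{(x,0) : x \in \bbZ\} \,\cup\, \{(0,y) : y \in \bbZ_{\geq 1}\},
\]
so that $\sigma(A) = \{(x,0) : x \in \bbZ\} \cup \{(0,y) : y \in \bbZ_{\leq -1}\}$. Coverage $A + \sigma(A) = \bbZ^2$ is immediate, and minimality is verified by hand: removing $(x_0,0)$ with $x_0 \neq 0$ loses the point $(x_0,-1)$, and removing $(0,y_0)$ with $y_0 \geq 1$ loses $(1,y_0)$. This non-product construction is the ingredient your plan is missing.
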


\begin{proof}
If $n = 1$, then $\sigma$ is the identity map, and thus in this case, $A$ can taken to be the subset $W$ of $\bbZ$ as in \cite[Proposition 3]{Kwon}. 

If $n = 2$, then one of the following conditions hold. 
\begin{enumerate}
\item 
$M_0$ is a subgroup of $\bbZ^2$ of rank one and $\sigma$ is conjugate to 
$$
\begin{pmatrix}
1 & * \\
0 & 1
\end{pmatrix},
$$
\item $M_0 = \bbZ^2$ and $\sigma$ is conjugate to one of the matrices in Equation \eqref{Eqn:8Matrix}.
\end{enumerate}
For simplicity, we will assume that $\sigma$ is equal to the above matrix, or equal to one of the matrices in Equation \eqref{Eqn:8Matrix}. In the first case, we can take $A = W \times W$ (by Proposition \ref{Prop:ABCD}) and in the second case, we can take $A$ to be 
\begin{itemize}
\item $W \times W$, 
\item $\{(x, 0)\,|\, x\in \bbZ\} \cup \{(0, y)\,|\, y\in \bbZ_{\geq 1}\}$, 
\item $\{(x, 0)\,|\, x\in \bbZ_{\geq 1}\} \cup \{(0, y)\,|\, y\in \bbZ\}$, 
\item $\{(x, 0)\,|\, x\in \bbZ\} \cup \{(0, y)\,|\, y\in \bbZ_{\geq 1}\}$, 
\item $\calS \times \calT$,
\item $\calV \times \calU$, 
\item $\calU \times \calV$, 
\item $(-\calS) \times \calT$
\end{itemize}
according as 
$\sigma$ is equal to the matrices in Equation \eqref{Eqn:8Matrix}.
Thus Theorem \ref{Thm:AExists} holds for $n= 1, 2$. 

Let $n\geq 3$ be an integer such that Theorem \ref{Thm:AExists} holds for free abelian groups of rank $<n$. 
Let $M'$ be a subgroup of $\bbZ^n$ such that $M'$ surjects onto $M_r /M_{r-1}$.
Denote the restriction of $\sigma$ to $M_r/M_{r-1}$ by $\bar \sigma$. 
Since the result holds for $n=1, 2$ and $M_r/M_{r-1}$ has rank at most two, it follows that there is a subset $C$ of $M_r/M_{r-1}$ such that $(C, \bar \sigma (C))$ is a co-minimal pair in $M_r/M_{r-1}$. 
Let $C'$ be a subset of $M'$ such that the map $M \to M_r/M_{r-1}$ yields a bijection between $C'$ and $C$. 
Note that $M_{r-1}$ is stable under the action of $\sigma$. By the induction hypothesis, there is a subset $A$ of $M_{r-1}$ such that $(A, \sigma(A))$ is a co-minimal pair $M_{r-1}$. 
By Proposition \ref{Prop:ABCD}, $(A + C', \sigma (A) + \sigma(C'))$ is a co-minimal pair in $M_r = \bbZ^n$. Thus Theorem \ref{Thm:AExists} follows. 
\end{proof}

\begin{theorem}
\label{Thm:AExistsQuadrant}
Let $\sigma$ be an automorphism of $\bbZ^{2d}$ such that its matrix with respect to the standard basis of $\bbZ^{2d}$ has a block upper triangular form having the matrices 
$$
\begin{pmatrix}
0 & 1 \\
1 & 0 
\end{pmatrix}
,
\begin{pmatrix}
0 & -1 \\
-1 & 0 
\end{pmatrix}
$$
along the diagonal. 
Then there exists a subset $A$ of $\bbZ^{2d}$ contained in a quadrant such that $(A, \sigma(A))$ is a co-minimal pair. 
\end{theorem}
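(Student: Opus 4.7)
The plan is to prove Theorem \ref{Thm:AExistsQuadrant} by induction on $d$, using Proposition \ref{Prop:ABCD} along the $\sigma$-stable flag coming from the block upper triangular structure of $\sigma$. Put $M_i := \bbZ e_1 \oplus \cdots \oplus \bbZ e_{2i}$; block upper triangularity gives $\sigma(M_i) \subseteq M_i$, with $\sigma|_{M_1}$ equal to the top-left diagonal block, and the induced automorphism $\bar\sigma$ on $\bbZ^{2d}/M_1 \cong \bbZ^{2(d-1)}$ again block upper triangular with diagonal blocks from $\left\{\left(\begin{smallmatrix}0 & 1\\1 & 0\end{smallmatrix}\right), \left(\begin{smallmatrix}0 & -1\\-1 & 0\end{smallmatrix}\right)\right\}$, so the hypothesis propagates cleanly to the quotient.

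For the base case $d = 1$, I would start from the co-minimal pair $(\calS, \calT)$ of Theorem \ref{Thm:CoMinS}, where $\calS \subseteq \bbZ_{\leq -1}$ and $\calT \subseteq \bbZ_{\geq 1}$. A single application of Proposition \ref{Prop:ABCD} (with $H = \bbZ \times \{0\}$ in $G = \bbZ^2$, inner pair $(\calS \times \{0\}, \calT \times \{0\})$, and outer pair $(\{0\}\times \calT, \{0\}\times\calS)$) shows that $(\calS \times \calT, \calT \times \calS)$ is a co-minimal pair in $\bbZ^2$. For $\sigma = \left(\begin{smallmatrix}0 & 1\\1 & 0\end{smallmatrix}\right)$, set $A_1 := \calS \times \calT$, which lies in the quadrant $\bbZ_{\leq -1} \times \bbZ_{\geq 1}$ and has $\sigma(A_1) = \calT \times \calS$. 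For $\sigma = \left(\begin{smallmatrix}0 & -1\\-1 & 0\end{smallmatrix}\right)$, transport this co-minimal pair through the automorphism $(x,y)\mapsto(-x,y)$ of $\bbZ^2$ to obtain $A_1 := (-\calS) \times \calT$ in the quadrant $\bbZ_{\geq 1} \times \bbZ_{\geq 1}$ with $\sigma(A_1) = (-\calT)\times \calS$, using that co-minimality is preserved under any group automorphism.

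For the inductive step, by the inductive hypothesis choose $\bar A \subseteq \bbZ^{2d}/M_1$ in a quadrant with $(\bar A, \bar\sigma(\bar A))$ co-minimal, and lift $\bar A$ to $A' \subseteq \bbZ e_3 \oplus \cdots \oplus \bbZ e_{2d}$ via the canonical section of $\bbZ^{2d} \twoheadrightarrow \bbZ^{2d}/M_1$; since $\sigma^{\mo}(M_1) = M_1$, the restriction of this projection to $\sigma(A')$ is also a bijection onto $\bar\sigma(\bar A)$. Apply Proposition \ref{Prop:ABCD} with $H = M_1$, inner pair $(A_1, \sigma(A_1))$ from the base case, and outer pair $(A', \sigma(A'))$ to conclude that $(A_1 + A', \sigma(A_1 + A'))$ is a co-minimal pair in $\bbZ^{2d}$. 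Because $A_1$ has fixed nonzero signs in coordinates $1, 2$ and $A'$ inherits the quadrant signs of $\bar A$ in coordinates $3, \ldots, 2d$, the set $A_1 + A'$ has a consistent nonzero sign in every coordinate and thus lies in a quadrant of $\bbZ^{2d}$. The main point of care is this quadrant bookkeeping — the lift $A'$ on its own is \emph{not} contained in a quadrant since its first two coordinates vanish, but the sum $A_1 + A'$ is, which is all that the theorem requires; the only combinatorially nontrivial input is the existence of $(\calS, \calT)$ from Theorem \ref{Thm:CoMinS}, and the rest is a clean functorial use of Proposition \ref{Prop:ABCD} along the $\sigma$-invariant flag.
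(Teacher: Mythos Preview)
Your proof is correct and follows essentially the same approach as the paper: induction on $d$ via Proposition \ref{Prop:ABCD} along the $\sigma$-stable flag, with base case $A_1 = \calS \times \calT$ or $(-\calS)\times\calT$. The only cosmetic difference is that you peel off the first $2\times 2$ block (putting the base case in the subgroup $H=M_1$ and the inductive hypothesis in the quotient), whereas the paper peels off the last block (putting the inductive hypothesis in the subgroup $M = \bbZ e_1\oplus\cdots\oplus\bbZ e_{2(d-1)}$ and the base case in the quotient); both orderings work for the same reason, and your explicit check that $\pi|_{\sigma(A')}$ is injective is a detail the paper leaves implicit.
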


\begin{proof}
If $d = 1$, then $A$ can be taken to be $\calS \times \calT, (-\calS) \times \calT$. 
Let $d\geq 2$ be an integer and assume that the result holds for free groups of rank $2(d-1)$. 
Let $\sigma$ be an automorphism of $\bbZ^{2d}$ satisfying the given condition. 
Let $M$ denote the subgroup of $\bbZ^{2d}$ generated by $e_1, \cdots, e_{2(d-1)}$ and $M'$ denote the subgroup of $\bbZ^{2d}$ generated by $e_{2d-1}, e_{2d}$. Note that $M'$ surjects onto $\bbZ^{2d}/M$. Note that $M$ is stable under the action of $\sigma$. By the induction hypothesis, there is a subset $A$ of $M$ contained in a quadrant such that $(A, \sigma(A))$ is a co-minimal pair in $M$. 
Since the result holds for $d = 1$, it follows that there exists a subset $C$ of $M'$ contained in a quadrant such that the images of $C$ and $\sigma(C)$ in $\bbZ^{2d}/M$ form a co-minimal pair. By Proposition \ref{Prop:ABCD}, $(A+ C, \sigma(A) + \sigma(C))$ is a co-minimal pair in $\bbZ^{2d}$. Since $A+C$ is contained in a quadrant, the result follows by induction. 
\end{proof}

\begin{proof}
[Proof of Theorem \ref{Thm:Z2d}]
The first part follows from Theorem \ref{Thm:AExists}. 

For $d = 1$, the group $\bbZ^{2d}$ admits infinitely many automorphisms of the form 
$$
\begin{pmatrix}
1 & * \\
0 & 1
\end{pmatrix}
$$
with $*\in \bbZ$. So by Theorem \ref{Thm:AExists}, for each such automorphism $\sigma$ of $\bbZ^{2d}$, there is a subset $A$ of $\bbZ^{2d}$ such that $(A, \sigma(A))$ is a co-minimal pair in $\bbZ^{2d}$. 
To establish the second part for $d\geq 2$, note that there are infinitely many automorphisms of $\bbZ^{2d}$ which are block upper triangular where the blocks are of size $2\times 2$ and the matrices lying along the diagonal blocks are equal to some of 
$$
\begin{pmatrix}
1 & 0 \\
0 & 1
\end{pmatrix}
,
\begin{pmatrix}
1 & 0 \\
0 & -1
\end{pmatrix}
,
\begin{pmatrix}
-1 & 0 \\
0 & 1
\end{pmatrix}
,
\begin{pmatrix}
-1 & 0 \\
0 & -1
\end{pmatrix}
,
\begin{pmatrix}
0 & 1 \\
1 & 0 
\end{pmatrix}
,
\begin{pmatrix}
0 & -1 \\
1 & 0 
\end{pmatrix}
,
\begin{pmatrix}
0 & 1 \\
-1 & 0 
\end{pmatrix}
,
\begin{pmatrix}
0 & -1 \\
-1 & 0 
\end{pmatrix},
$$
then by Theorem \ref{Thm:AExists}, for each such automorphism $\sigma$ of $\bbZ^{2d}$, there is a subset $A$ of $\bbZ^{2d}$ such that $(A, \sigma(A))$ is a co-minimal pair in $\bbZ^{2d}$. 
\end{proof}

\section{Acknowledgements}
The first author would like to thank the Department of Mathematics at the Technion where a part of the work was carried out. The second author would like to acknowledge the Initiation Grant from the Indian Institute of Science Education and Research Bhopal, and the INSPIRE Faculty Award from the Department of Science and Technology, Government of India.


\begin{thebibliography}{Kwo19}

\bibitem[BS19]{CoMin1}
Arindam Biswas and Jyoti~Prakash Saha, \emph{On additive co-minimal pairs},
  Preprint available at \url{https://arxiv.org/abs/1906.05837}, 2019.

\bibitem[Erd54]{Erdos54}
Paul Erd\H{o}s, \emph{Some results on additive number theory}, Proc. Amer.
  Math. Soc. \textbf{5} (1954), 847--853. \MR{0064798}

\bibitem[Erd57]{ErdosSomeUnsolved57}
\bysame, \emph{Some unsolved problems}, Michigan Math. J. \textbf{4} (1957),
  291--300. \MR{0098702}

\bibitem[Kwo19]{Kwon}
Andrew Kwon, \emph{A note on minimal additive complements of integers},
  Discrete Math. \textbf{342} (2019), no.~7, 1912--1918. \MR{3937752}

\bibitem[Lor54]{Lorentz54}
G.~G. Lorentz, \emph{On a problem of additive number theory}, Proc. Amer. Math.
  Soc. \textbf{5} (1954), 838--841. \MR{0063389}

\bibitem[Nat11]{NathansonAddNT4}
Melvyn~B. Nathanson, \emph{Problems in additive number theory, {IV}: {N}ets in
  groups and shortest length {$g$}-adic representations}, Int. J. Number Theory
  \textbf{7} (2011), no.~8, 1999--2017. \MR{2873139}

\end{thebibliography}

\def\cprime{$'$} \def\Dbar{\leavevmode\lower.6ex\hbox to 0pt{\hskip-.23ex
  \accent"16\hss}D} \def\cfac#1{\ifmmode\setbox7\hbox{$\accent"5E#1$}\else
  \setbox7\hbox{\accent"5E#1}\penalty 10000\relax\fi\raise 1\ht7
  \hbox{\lower1.15ex\hbox to 1\wd7{\hss\accent"13\hss}}\penalty 10000
  \hskip-1\wd7\penalty 10000\box7}
  \def\cftil#1{\ifmmode\setbox7\hbox{$\accent"5E#1$}\else
  \setbox7\hbox{\accent"5E#1}\penalty 10000\relax\fi\raise 1\ht7
  \hbox{\lower1.15ex\hbox to 1\wd7{\hss\accent"7E\hss}}\penalty 10000
  \hskip-1\wd7\penalty 10000\box7}
  \def\polhk#1{\setbox0=\hbox{#1}{\ooalign{\hidewidth
  \lower1.5ex\hbox{`}\hidewidth\crcr\unhbox0}}}
\providecommand{\bysame}{\leavevmode\hbox to3em{\hrulefill}\thinspace}
\providecommand{\MR}{\relax\ifhmode\unskip\space\fi MR }
\providecommand{\MRhref}[2]{%
  \href{http://www.ams.org/mathscinet-getitem?mr=#1}{#2}
}
\providecommand{\href}[2]{#2}

\end{document}